\renewcommand{\appendix}{%
  \par
  \setcounter{section}{0}%
  \renewcommand{\thesection}{\Alph{section}}%
}
\setlist{nosep}
\theoremstyle{plain}
\newtheorem{theorem}{Theorem}[section]
\newtheorem{proposition}[theorem]{Proposition}
\newtheorem{corollary}[theorem]{Corollary}
\theoremstyle{definition}
\newtheorem{definition}[theorem]{Definition}
\newtheorem{assumption}[theorem]{Assumption}
\theoremstyle{remark}
\theoremstyle{plain}
\theoremstyle{definition}
\newtheorem{example}[theorem]{Example}
\newcommand{\reals}{\mathbb{R}}
\newcommand{\mc}{\mathcal}
\newcommand{\diag}{\operatorname{diag}}
\newcommand{\tr}{\operatorname{tr}}
\newcommand{\fw}{\texttt}
\newcommand{\randk}{\text{\texttt{rand}-$k$}}
\newcommand{\haark}{\text{\texttt{haar}-$k$}}
\newcommand{\normk}{\text{\texttt{norm}-$k$}}
\begin{document}






\title{Problem-dependent convergence bounds for randomized linear gradient compression}

\author{Thomas~Flynn,~
        Patrick~Johnstone,~
        and~Shinjae~Yoo~
\thanks{T. Flynn (\url{tflynn@bnl.gov}) and S. Yoo (\url{sjyoo@bnl.gov}) are  with the  Computing \& Data Sciences directorate, Brookhaven National Laboratory, Upton,
NY, 11973 USA.}
\thanks{P. Johnstone (\url{patrick.r.johnstone@gmail.com}) is with Meta, New York, NY, 10001 USA. Work performed while at Brookhaven National Laboratory.}
}

\markboth{Flynn \MakeLowercase{\textit{et al.}}: Problem-dependent convergence bounds for randomized linear gradient compression}{}

\maketitle

\begin{abstract}
In distributed optimization, the communication of model updates can be a performance bottleneck. Consequently, gradient compression has been proposed as a means of increasing optimization throughput. In general, due to information loss, compression introduces a penalty on the number of iterations needed to reach a solution. In this work, we investigate how the iteration penalty depends on the interaction between compression and problem structure, in the context of non-convex stochastic optimization. We focus on linear schemes, where compression and decompression can be modeled as multiplication with a random matrix. We consider several distributions of matrices, among them Haar-distributed  orthogonal matrices and matrices with random Gaussian entries. We find that the impact of compression on convergence can be quantified in terms of a smoothness matrix associated with the objective function, using a norm defined by the compression scheme. The analysis reveals that in certain cases, compression performance is related to low-rank structure or other spectral properties of the problem and our bounds predict that the penalty introduced by compression is significantly reduced compared to worst-case bounds that only consider the compression level, ignoring problem data. We verify the theoretical findings experimentally, including fine-tuning an image classification model.
\end{abstract}
\vspace{-1em}
\section{Introduction}

Training machine learning models on large datasets often involves distributed optimization. In this context distributed training means using multiple compute nodes to train a machine learning model collaboratively.
A significant performance bottleneck in distributed training comes from communication among training processes. In algorithms like Parallel SGD, model updates are synchronized at every optimization step, using a collective communication primitive like \emph{all-reduce} \cite{distbelief,zinkevich2010,paramserver}. In many HPC settings, there is an imbalance between local compute power and network bandwidth, such that time spent in network communication accounts for the majority of time in each iteration. 
One approach to address this bottleneck is to reduce communication by compressing model updates. In these schemes, model updates are compressed before communication, and these compressed representations are synchronized through collective communication and then decompressed. When the savings in communication time is greater than the overhead from compression and decompression, the result is faster training iterations. That is, we achieve higher throughput. However, higher throughput alone is not sufficient to achieve a benefit because practical compression schemes are lossy, introducing approximation errors into the training algorithm. 
Hence in order for compression to be useful, the increase in throughput needs to be of sufficient magnitude to make up for the increased number of training steps that may be required to compensate for these approximation errors.

In this work we examine the performance of several compression schemes. We analyze  how compression impacts the convergence of the algorithms, in terms of the number of iterations required to reach a solution. 
We focus our attention on three schemes in particular. In the first scheme, referred to as $\randk$, at each step of optimization a random subset of $k$ elements of the model update is synchronized. The second scheme, $\haark$, involves compressing gradients by projection to a random $k$-dimensional subspace, using random matrices distributed according to the Haar distribution on Orthogonal matrices. The third scheme we consider is  $\normk$, and involves compression using matrices with i.i.d. entries from the normal distribution $\mc{N}(0,1)$.
Our optimization objective is to find a local minimum of a function $f: \mathbb{R}^m\to\mathbb{R}$:
$$\min_{x\in\reals^m}f(x)$$
We allow $f$ to be non-convex, and our analysis bounds the number of iterations needed to find approximate stationary points. These are values of the parameter $x$ where $\|\nabla f(x)\|^2$ is small. The pseudo-code for the compression approach we study is listed in Algorithm \ref{algo1}. We assume that optimization is carried out by $r$ tasks. Iteration $t$ begins with a stochastic gradient computation at each task, producing the gradient estimates $g_{t}^i$. These gradients are then compressed locally, using a linear map $Q_t^T$ which is assumed to be the same across tasks. These compressed gradients are synchronized via all-reduce, and then decompressed using $Q_t$ into the vector $\Delta_t$. This $\Delta$ is then combined with a step size $\epsilon_t$ to update the parameter, obtaining $x_{t+1}$. Note that depending on the type of compression used, it is not necessary to actually form the compression matrix $Q_t$ (for instance, when using \randk{}).
\begin{algorithm}
  \caption{SGD with compression (runs on $r$ tasks)\label{algo1}}
\fw{\phantom{ }1:}\, \textbf{input:} Initial point $x_1 \in \reals^m$, same for all tasks. \\
\fw{\phantom{ }2:}\, \textbf{for} $t=1,2,\hdots$ \textbf{do} \\
\fw{\phantom{ }3:}\, \quad Compute stochastic gradient $g_t^i$ \\
\fw{\phantom{ }4:}\, \quad Compress: $\Delta_t^i \gets Q^T_tg_t^i$ \\
\fw{\phantom{ }5:}\, \quad All-reduce: $\Delta_t \gets \frac{1}{r}\sum_{i=1}^{r}\Delta_t^i$ \\
\fw{\phantom{ }6:}\,  \quad Decompress  $h_t \gets Q_t\Delta_t$ \\
\fw{\phantom{ }7:}\, \quad Perform update: $x_{t+1} = x_{t} - \epsilon_t h_{t}$
\end{algorithm}
Letting $g_{t} = \frac{1}{r}\sum_{i=1}^{r}g_{t}^i$ be the aggregate gradient estimate, the overall update to the iterate $x_{t+1}$ can be described as
\begin{equation}\label{q-formula}
x_{t+1} = x_{t} - \epsilon Q_{t}Q_{t}^Tg_{t}.
\end{equation}
\vspace{-3em}
\subsection{Related work}
A variety of compression schemes for optimization have been considered, including quantization  \cite{Alistarh2017}, such as single-bit quantization \cite{Seide2014Sep}, ternary quantization  \cite{Wen2017Dec}, random rounding   \cite{Horvoth2022sep}, along with techniques like sparsification \cite{Wangni2018Dec},  count sketches \cite{Ivkin2019Dec}, and low-rank projections as used in PowerSGD \cite{psgd}. Several schemes have associated convergence analyses, including those focusing on quantization \cite{Alistarh2017}, generic sparsity preserving compressors \cite{Khirirat2018Jun}, error-feedback and biased compressors \cite{sparsified2018} including the non-convex setting
    \cite{karimireddy19a}. 
For analysis, various characterizations of compression schemes have been used.
 For instance, \cite{efbv2022} defines an "$\omega$-unbiased compressor". Adapted to our setting of linear compressors (in the original definition of \cite{efbv2022}, the compressor need not be a linear mapping), an $\omega$-unbiased compressor is a distribution on $Q$ such that the following two properties hold, for any $g \in \reals^m$:
\begin{subequations}
\begin{align}
&\hspace{3em}\mathbb{E}[QQ^T]=I,  \label{unbiased}\\
&\mathbb{E}\left[\left\|g - QQ^Tg\right\|^2\right] \leq \omega\|g\|^2. \label{kcontraction}
\end{align}
\end{subequations}
Equation \eqref{unbiased} guarantees that the overall update in \eqref{q-formula} is unbiased, while \eqref{kcontraction} bounds the variance.
We note that the compressors studied in the present work, \normk{}, \randk{}, and \haark{} are all $\omega$-unbiased, with $\omega= \frac{m}{k}-1$ for $\randk{}$ and $\haark{}$ and  $\omega=\frac{m+1}{k}$ for  \normk{}.
 The compressor $\normk{}$ and $\randk{}$ also satisfy, after appropriate scaling, the criteria of $k$-contractions, as defined in \cite{sparsified2018}. 
While these abstractions facilitate analysis, they leave open the question of whether the performance bounds can be improved by taking into account the interaction between the compressor and the objective function. As can be seen below in Example \ref{ex:logreg}, the bounds that we derive can lead to tighter performance estimates compared to using \eqref{kcontraction} alone. While we focus on unbiased compressors, a systematic approach to biased compression appears in \cite{Beznosikov2023}.

 Using \textit{matrix smoothness} to analyze  compressors was introduced in \cite{Safaryan2021Dec} and further studied in \cite{Wang2022Dec,li2024detcgd}. For  a positive semidefinite matrix 
   $\mathbb{L}$, 
   let $\|\cdot\|_{\mathbb{L}}$ denote the  seminorm $\|x\|_{\mathbb{L}}\,=\,\sqrt{x^T\mathbb{L} x}$.
 The function $f$ is $\mathbb{L}$-smooth if 
 $\forall x,y$, 
 \begin{equation}\label{smoothness}
 f(x) \leq f(y) + \nabla f(y)^T(x-y) + \frac{1}{2}\|x-y\|_{\mathbb{L}}^2.
 \end{equation}
Intuitively, this matrix $\mathbb{L}$ can be taken to be a matrix that majorizes the Hessian matrix of the objective function. We adopt this formulation in our analysis.  An important difference in the present work is that we are not dealing with matrix step-sizes; rather, the matrix smoothness concept is used to analyze the interaction between the compressor and the problem structure in the scalar step-size setting. Furthermore, in this work we specifically investigate the performance of the compressors $\haark$ and $\normk$.

A notable compression technique using low-rank projections is PowerSGD \cite{psgd}. In PowerSGD, the initial values of $Q$ are random matrices, and these matrices adapt to the observed gradients, using subspace iteration. By comparison, in this work we consider the sequence of projection matrices to be i.i.d.  Follow up work such as  \cite{Ramesh2021Jul} demonstrates the utility of PowerSGD for large scale training. 

To address the loss in accuracy of the gradients associated with compression, error-feedback involves tracking the difference between the local gradient and it's compressed counterpart \cite{Seide2014Sep}\cite{strom15interspeech}.
The convergence properties of error-feedback schemes were considered in \cite{sparsified2018} and in \cite{karimireddy19a} for the case of non-convex functions. See also \cite{efbv2022} for a unified analysis that can handle variance reduction and error-feedback. In this work we focus on a simpler setting that does not maintain any state between iterations, in order to focus analysis on the statistical properties of the compressors.

Lower-bounds on the performance (overall communication, or number of iterations) of compressed training schemes have been considered in \cite{Albasyoni2020oct,Korhonen2021Dec}, which focus on the fundamental trade off between compression accuracy and compression level.
Notable work focusing on constrained optimization proved that the geometry of the constraint set impacts the overall iteration penalty of compression
\cite{Kasiviswanathan2021Dec}. Compression for distributed learning has been specialized for a variety of settings, including differential privacy  using quantization \cite{Agarwal2018Dec}, accounting for heterogeneous data across nodes \cite{Stich2020Sep}, compression in federated learning (FL) \cite{Li2022Dec,
Rothchild2020Jul}, decentralized training \cite{Vogels2020, Zhao2022Dec} and variations of adaptive gradient methods  \cite{Tang2021Jul}. 
Additionally, schemes that adjust compression based on gradient statistics \cite{Agarwal2021Mar} 
have been investigated.


In the compression scheme \haark{}, the matrix $Q_t$ is constructed using the Haar measure on orthogonal matrices. The Haar measure can be viewed as the natural generalization of the uniform distribution on the unit sphere to matrices. A defining property of this distribution is that if $Q$ is Haar-distributed and $U$ is any orthogonal matrix, then $UQ$ and $QU$ are Haar-distributed as well \cite{meckes_2019}. That is, for any measurable function $h:\mathbb{R}^{n \times n} \to \reals$, we have $\mathbb{E}[h(UQ)] = \mathbb{E}[h(QU)] = \mathbb{E}[h(Q)].$ In the compression method \haark{}, each $Q_{t}$ is made up of the first $k$ columns of an $n\times n$ Haar-distributed matrix (after scaling to ensure $\mathbb{E}[QQ^T] = I$).  An intuitive algorithm for sampling from the Haar distribution is to fill a matrix with i.i.d. normal random variables and then apply the Gram-Schmidt orthogonalization procedure; we use a more efficient algorithm based on the QR decomposition, following \cite{mezzadri2007generate}.

A result of our work is that random orthogonal compression leads to improved convergence bounds compared to Gaussian matrices. This agrees with signal reconstruction results \cite{irolasso2015}. That work
    analyzes a signal reconstruction when the observation is the product of a random matrix and the signal, considering both Gaussian and random orthogonal matrices for the observation matrix. Analysis shows that reconstruction error is better under orthogonal matrices.

\vspace{-1em}
\subsection{Our contributions}
This article has three main contributions. First,
    we derive optimization bounds for \haark{}, \normk{}, and \randk{}   that reflect how the compressor interacts with problem structure, leading to bounds that can be better than those solely accounting for compression while ignoring problem data.  To our knowledge, this is the first work to derive  performance bounds for compression with Haar random matrices.
    Second, the analytical formulas for the $\mc{Q}$-norm for different schemes (\randk{}, \haark{}, \normk{}) can  give insight into how the compressors compare against each other.
    Finally, experiments with linear regression and logistic regression suggest the theory captures real differences in performance among compressors.
\vspace{-1em}
\section{Properties of the compressors}
First, we define the three compression matrices considered.
\begin{definition}\label{def:mat} For $m\geq 1$ and $1\leq k \leq m$ we define three distributions on matrices $ Q \in \reals^{m\times k}$: 
\begin{itemize}
\item \haark{}: The $Q_t$ are such that $\sqrt{\frac{k}{m}}Q_t$ is distributed according to the Haar measure on $m\times k$ matrices \cite{meckes_2019}.

\item 
\normk{}: Each entry of 
$Q_t$ is an independent normal random variable with standard deviation $\frac{1}{\sqrt{k}}$.
\item \randk{}: The $Q_t$ are such that 
$\sqrt{\frac{k}{m}}Q_t$ has the distribution resulting from the removal of $m-k$ random columns from the identity matrix $I_{m\times m}$.
\end{itemize}
\end{definition}
\noindent Note that the scaling factors in the above definitions guarantee that the resulting estimators are unbiased when used for optimization.
We obtain different convergence bounds depending on the type of compression used. 
and in each case the impact of compression is quantified by the \textit{$\mc{Q}$-seminorm}:
\begin{definition}
Given a distribution $\mc{Q}$ over $\reals^{m\times k}$, the \textbf{$\boldsymbol{\mc{Q}}$-seminorm} is defined for  matrices $A \in \reals^{m\times m}$ as 
\begin{equation}\label{q-seminorm}
\|A\|_{\mc{Q}} = \left\|\mathbb{E}_{Q \sim \mc{Q}}\left[QQ^T A QQ^T\right]\right\|.
\end{equation}
\end{definition}
\noindent The $\mc{Q}$-seminorm plays a key role in our analyses below. When specialized to the case if linear regression, our convergence result depends on the $\mc{Q}$-seminorm of the data covariance matrix. In the general case of non-convex objectives, the convergence depends, roughly speaking, on a uniform bound on the $\mc{Q}$-seminorm of the Hessian matrix of the objective. In essence, the $\mc{Q}$-seminorm replaces the Lipschitz constant of the gradient in the analyses, and smaller values imply faster convergence.

\begin{figure}[t]
\centering
\includegraphics[width=\columnwidth]{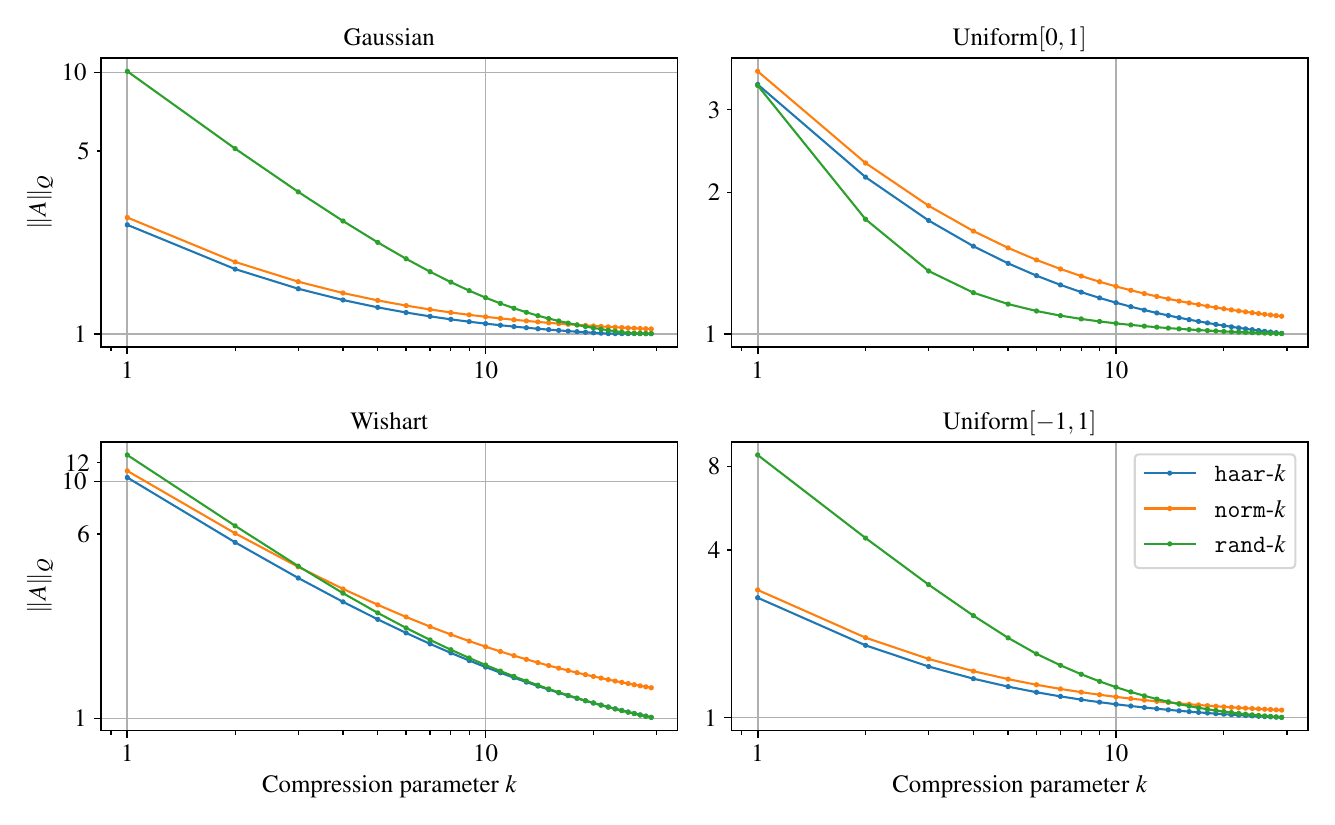}
\caption{Empirical values of the $\mc{Q}$-seminorm for matrices from several distributions, as the compression parameter varies. Upper left: Symmetric matrix with $\mc{N}(0,1)$ entries. Upper right: Symmetric matrix with $\mathrm{Unif}[0,1]$ entries. Lower left: Rank $r=30$. Wishart matrix. Lower right: Symmetric matrix with $\mathrm{Unif}[-1,1]$-entries. All matrices are $30 \times 30$.\label{fig-compress}}
\end{figure}

We now state our results on the 
$\mc{Q}$-seminorm for the various compression schemes we consider. In each case, we provide an exact formula for the 
inner expectation. 
\begin{proposition}\label{prop:q} Let $A$ be a matrix in $\reals^{m\times m}$.
Define the constant $\beta$ as
$\beta = \frac{m(m-k)}{(m+2)(m-1)}.$
Then $\|A\|_{\mc{Q}}$ for $Q \in \{\haark{},\randk{},\normk{}\}$ may be computed as follows:
\begin{align}
\|A\|_{\haark{}} &=
\frac{m}{k}\left\| (1-\beta) A + \beta \frac{\tr A}{m} I\right\|,\label{qsemi-orth-k} \\
\|A\|_{\randk{}} &=
\frac{m}{k}\left\| \frac{k-1}{m-1} A + \frac{m-k}{m-1}\diag A\right\|,\label{qsemi-rand-k}  \\
\|A\|_{\normk{}} 
&=
\frac{m}{k}\left\|\frac{k+1}{m}A + \frac{\tr A}{m} I\right\|. \label{qsemi-norm-k}
\end{align}
\end{proposition}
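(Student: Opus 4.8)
The plan is to observe that the operator norm in \eqref{q-seminorm} is applied only at the very end, so the entire content of the proposition is the \emph{exact} evaluation of the inner matrix $M_{\mc{Q}}(A) := \E_{Q\sim\mc{Q}}[QQ^TAQQ^T]$ for each of the three distributions; once $M_{\mc{Q}}(A)$ is in closed form, \eqref{qsemi-orth-k}--\eqref{qsemi-norm-k} follow by taking $\|\cdot\|$ and factoring out $\frac{m}{k}$. I would treat the three schemes separately, after first rewriting $QQ^T$ in each case in a canonical form dictated by Definition~\ref{def:mat}: for \randk{}, $QQ^T=\frac{m}{k}D$ with $D$ the diagonal $0/1$ indicator of a uniformly random size-$k$ subset $S\subseteq\{1,\dots,m\}$; for \normk{}, $QQ^T=\sum_{j=1}^k q_jq_j^T$ with columns $q_j$ i.i.d.\ $\mc{N}(0,\tfrac1k I)$; and for \haark{}, $QQ^T=\frac{m}{k}P$ with $P$ the orthogonal projection onto a uniformly random $k$-dimensional subspace of $\reals^m$. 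I would also flag at the outset that the stated formulas for \haark{} and \normk{} are for symmetric $A$ (the relevant case, since $A$ is a Hessian or covariance): the general computation produces an extra $A^T$ term that collapses into $A$ under symmetry, whereas \randk{} holds for arbitrary $A$.

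The two easier cases I would dispatch first. For \randk{}, since $D$ is diagonal we have $(DAD)_{ij}=D_{ii}D_{jj}A_{ij}$ entrywise, so $\E[(DAD)_{ij}]$ is $A_{ij}$ times $\Pr[i\in S]=\frac{k}{m}$ on the diagonal and times $\Pr[i,j\in S]=\frac{k(k-1)}{m(m-1)}$ off the diagonal; collecting diagonal and off-diagonal parts and multiplying by $\frac{m^2}{k^2}$ yields \eqref{qsemi-rand-k} directly (no symmetry needed). For \normk{}, I would expand $M(A)=\sum_{i,j}\E[q_iq_i^TAq_jq_j^T]$; the $i\ne j$ terms factor through independence as $\E[q_iq_i^T]A\,\E[q_jq_j^T]=\frac{1}{k^2}A$, while each diagonal term is evaluated by the Gaussian fourth-moment (Isserlis/Wick) identity $\E[qq^TAqq^T]=\sigma^4(\tr(A)I+A+A^T)$ with $\sigma^2=\tfrac1k$. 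Summing the $k(k-1)$ cross terms and $k$ diagonal terms and using $A=A^T$ gives $M(A)=\frac{k+1}{k}A+\frac{\tr A}{k}I=\frac{m}{k}\big(\frac{k+1}{m}A+\frac{\tr A}{m}I\big)$, i.e.\ \eqref{qsemi-norm-k}.

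The hard part is \haark{}, i.e.\ computing $\E[PAP]$ for a uniform random rank-$k$ projection. My approach is to exploit rotation invariance: for any orthogonal $O$, $OPO^T$ has the same law as $P$, so $A\mapsto\E[PAP]$ is $O(m)$-equivariant. Restricting to symmetric $A$ and using the irreducible decomposition $\mathrm{Sym}(\reals^m)=\{\text{traceless}\}\oplus\{\text{scalar}\}$ together with Schur's lemma, the map must act as a scalar on each summand, forcing $\E[PAP]=pA+q\,\tr(A)I$ for constants $p,q$ depending only on $m,k$. I would then pin down $p$ and $q$ through two scalar moments. Writing $d=\E[P_{ii}^2]$ and $o=\E[P_{ij}^2]$ ($i\ne j$), one gets $p=d-o$ and $q=o$ by matching $\E[\tr(PAPA)]=\sum_{ij}\lambda_i\lambda_j\E[P_{ij}^2]$ (diagonalizing $A$) against $\tr((pA+q\tr(A)I)A)$. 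The identity $\tr(P^2)=\tr(P)=k$ gives $md+m(m-1)o=k$, and the last ingredient is $d$: since $P_{ii}=\|Pe_i\|^2$ is the squared norm of the projection of a fixed unit vector onto a uniform random $k$-subspace, $P_{ii}\sim\mathrm{Beta}(k/2,(m-k)/2)$, whence $d=\frac{k(k+2)}{m(m+2)}$. Solving yields $o=\frac{k(m-k)}{m(m-1)(m+2)}$ and $p=d-o$, and multiplying $\E[PAP]=pA+o\,\tr(A)I$ by $\frac{m^2}{k^2}$ and simplifying reproduces exactly the coefficients $\frac{m}{k}(1-\beta)$ and $\frac{\beta}{k}$ of \eqref{qsemi-orth-k} with $\beta=\frac{m(m-k)}{(m+2)(m-1)}$.

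I expect the \haark{} second moment to be the sole genuine obstacle; the combinatorial (\randk{}) and Gaussian (\normk{}) computations are routine. As a consistency check I would note that setting $A=I$ recovers $M(I)=\E[(QQ^T)^2]$, from which the contraction constants in \eqref{kcontraction} follow: $\omega=\frac{m}{k}-1$ for \haark{} and \randk{} and $\omega=\frac{m+1}{k}$ for \normk{}, matching the values asserted earlier in the text and validating the moment formulas.
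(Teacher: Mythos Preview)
Your proposal is correct. The \randk{} argument is essentially identical to the paper's (Proposition~\ref{proof-rand-k}): both compute the Hadamard structure $\E[(DAD)_{ij}]=\Pr[i,j\in S]\,A_{ij}$ via the hypergeometric probabilities. Your \normk{} argument via Isserlis/Wick is the natural Gaussian fourth-moment computation the paper alludes to but does not write out (Proposition~\ref{proof-norm-k} says only ``similar to Proposition~\ref{proof-orth-k}, with simplifications'').

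The genuine divergence is in the \haark{} case. The paper (Proposition~\ref{proof-orth-k}) proceeds by brute force: it reduces to rank-one $A=vv^T$, expands $(QQ^TAQQ^T)_{i,j}$ as a quadruple sum over entries of $Q$, and evaluates each surviving term using the catalogue of fourth-order Haar moments $\E[Q_{1,1}^4],\E[Q_{1,1}^2Q_{2,1}^2],\E[Q_{1,1}^2Q_{2,2}^2],\E[Q_{1,1}Q_{1,2}Q_{2,1}Q_{2,2}]$ from Proposition~\ref{rand-orth-res} (quoted from Meckes). This requires a lengthy case split on $i=j$ versus $i\ne j$ and many pages of index bookkeeping. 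Your route---$O(m)$-equivariance plus Schur on $\mathrm{Sym}(\reals^m)=\{\text{traceless}\}\oplus\reals I$ to force $\E[PAP]=pA+q\,\tr(A)I$, then fixing $(p,q)$ from the single scalar $d=\E[P_{ii}^2]=\frac{k(k+2)}{m(m+2)}$ (via $P_{ii}\sim\mathrm{Beta}(k/2,(m-k)/2)$) together with the deterministic identity $md+m(m-1)o=\tr(P^2)=k$---is substantially shorter and more conceptual. The trade-off: the paper's approach is self-contained once the four Haar moments are granted, whereas yours imports two outside facts (irreducibility of the traceless representation and the Beta law for $\|Pe_i\|^2$); on the other hand, your method makes the structure $pA+q\,\tr(A)I$ transparent a priori rather than emerging at the end, and would generalize more readily to other invariant ensembles. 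Your remark that the \haark{} and \normk{} formulas tacitly assume $A=A^T$ is also on point---the paper's Proposition~\ref{proof-orth-k} states this hypothesis explicitly even though Proposition~\ref{prop:q} itself does not.
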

The derivations for these formulas may be found in the Appendix; see  Propositions \ref{proof-rand-k}, \ref{proof-orth-k},  and
 \ref{proof-norm-k} for $\randk{}, \haark{}$ and $\normk{}$ respectively. 
An empirical comparison of these bounds for several classes of matrices is shown in Figure \ref{fig-compress}. Each plot in the figure indicates the $\mathbb{Q}$-seminorm of a matrix drawn from a specific distribution, for the three values of $\mc{Q}$ and increasing values of $k$. In general, the \haark{} norm of the matrices is smallest, followed closely by \normk{}. We see that the relative performance of \randk{} depends heavily on the type of matrix, and in some cases on the compression level. We note that it is also possible to generate matrices where $\|\cdot\|_{\text{\normk{}}} \leq \|\cdot\|_{\text{\haark{}}}$, although this is not possible for the smallest values $k$ - for instance, when $k=1$, we can see that $\|\cdot\|_{\text{\haark{}}} = \frac{m}{m+2}\|\cdot\|_{\text{\normk{}}}$. 
Finally, it can be shown that, at least for 
$\mc{Q} \in \{ \haark{}, \normk{}, \randk{}\}$, 
the seminorm 
$\|\cdot\|_{\mc{Q}}$ 
is indeed a norm. See Proposition \ref{q-is-norm} for details.
\vspace{-1em}
\subsection{Relation between $\mc{Q}$-norm and other characterizations}
 \subsubsection{$\omega$-unbiased compressors}
The $\omega$-unbiased criterion can be framed in terms of the $\mc{Q}$-seminorm as follows:
\begin{proposition}\label{omega-q}
Suppose that $\mc{Q}$ is a distribution on matrices such that $\mathbb{E}[QQ^T] = I$. Then $\mc{Q}$ determines an $\omega$-unbiased compressor according to \ref{kcontraction} iff $\|I\|_{\mc{Q}} \leq (\omega+1)$.
\end{proposition}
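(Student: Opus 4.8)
The plan is to reduce the variance condition \eqref{kcontraction} to a statement about the eigenvalues of the matrix $\mathbb{E}[(QQ^T)^2]$, which is exactly the quantity appearing inside $\|I\|_{\mc{Q}}$. The whole argument is driven by the algebraic simplification that the unbiasedness hypothesis $\mathbb{E}[QQ^T]=I$ enables.

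First I would expand the quadratic $\|g - QQ^Tg\|^2$. Writing $P = QQ^T$, which is symmetric, gives $\|g - Pg\|^2 = g^T(I - 2P + P^2)g$. Taking expectations and using $\mathbb{E}[P] = \mathbb{E}[QQ^T] = I$ from \eqref{unbiased}, the cross term collapses and I obtain
$$\mathbb{E}\big[\|g - QQ^Tg\|^2\big] = g^T\big(\mathbb{E}[(QQ^T)^2] - I\big)g.$$
Thus the defining inequality \eqref{kcontraction}, namely that this expression be at most $\omega\|g\|^2$ for every $g \in \reals^m$, is precisely the positive-semidefinite ordering $\mathbb{E}[(QQ^T)^2] - I \preceq \omega I$, i.e. $\mathbb{E}[(QQ^T)^2] \preceq (\omega+1)I$.

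Next I would translate this ordering into a norm bound. Since $P = QQ^T$ is symmetric positive semidefinite, each $P^2$ is symmetric positive semidefinite, and hence so is the average $M := \mathbb{E}[(QQ^T)^2]$. For such a matrix the spectral norm equals its largest eigenvalue, so $M \preceq (\omega+1)I$ holds iff $\|M\| \leq \omega + 1$. Finally I would identify $M$ with the $\mc{Q}$-seminorm argument: by \eqref{q-seminorm}, $\|I\|_{\mc{Q}} = \|\mathbb{E}[\,QQ^T\, I\, QQ^T\,]\| = \|\mathbb{E}[(QQ^T)^2]\| = \|M\|$, which establishes the claimed equivalence $\|I\|_{\mc{Q}} \leq \omega+1$ and delivers both directions of the \emph{iff} at once, since every implication in the chain is reversible.

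No step presents a genuine obstacle; the one point that requires care is the passage from the universal quadratic-form bound to the operator-norm bound, which is valid only because $M$ is symmetric (indeed positive semidefinite). I would therefore make explicit that $\mathbb{E}[(QQ^T)^2]$ inherits symmetry and positive semidefiniteness from the integrand before invoking the eigenvalue characterization of the spectral norm.
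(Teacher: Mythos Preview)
Your proposal is correct and follows essentially the same route as the paper: expand the squared error, cancel the cross term using $\mathbb{E}[QQ^T]=I$, and convert the universal quadratic-form inequality into a spectral-norm bound on $\mathbb{E}[(QQ^T)^2]$. If anything, you are more careful than the paper at the one delicate step, explicitly invoking the positive semidefiniteness of $\mathbb{E}[(QQ^T)^2]$ so that $\lambda_{\max}=\|\cdot\|$ and the equivalence goes through in both directions.
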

\begin{proof}
By definition, a distribution $\mc{Q}$ determines a $\omega$-unbiased compressor iff for any $g$
we have $\mathbb{E}[\|g - QQ^Tg\|^2] \leq \omega\|g\|^2$.
Expanding out the left-hand side of this equation, and using the unbiasedness property, this is equivalent to 
$
\mathbb{E}[g^TQQ^TQQ^Tg] \leq (\omega+1)\|g\|^2
$.
Since $g$ is arbitrary, this equation is equivalent to stating $\|\mathbb{E}[QQ^TQQ^T]\| \leq (\omega+1)$
which is exactly the condition for boundedness of $\|I\|_{\mc{Q}}$.
\end{proof}
\noindent It can also be shown that a $\omega$-unbiased compressor satisfies $\|A\|_{\mc{Q}} \leq (\omega+1)\|A\|$. However, as shown in the Example, the benefit of the $\mc{Q}$ norm is that by taking into account more problem information it may lead to tighter optimization bounds.

\subsubsection{The quantity $\mc{L}(C,\mathbb{L})$ \label{sect:relate}}
     Another approach to capturing the interaction between a randomized compressor $C$ and the function $f$ is via the quantity $\mathcal{L}(C,\mathbb{L})$ defined in \cite{Safaryan2021Dec,Wang2022Dec}:
$  \mathcal{L}(C,\mathbb{L})\hspace{-0.2em}=\hspace{-0.2em} \inf\{\omega \geq 0 : \forall x, \mathbb{E}\left[\|C(x) - x\|_{\mathbb{L}}^2\right] \leq 
  \omega\|x\|^2\}.$
This  quantity captures the interaction between compression and smoothness matrices, and is a dominant factor in the convergence bounds in \cite{Safaryan2021Dec,Wang2022Dec}. When the underlying compressor $C$ is one of $\haark$ or $\normk$, our the results for the $\mc{Q}$-norm  (Propositions \ref{proof-orth-k}  and \ref{proof-norm-k}) could be applied to compute $\mc{L}(C,\mathbb{L})$. We leave  exploration of the relation between the $\mc{Q}$-norm and matrix step-sizes to future work.

\vspace{-1.0em}
\section{Optimization analysis}
We assume compression and decompression is unbiased:
\begin{assumption}\label{asu:unbiased}
Let $\mc{Q}$ be a distribution on matrices $Q\in\reals^{m\times k}$ such that $\mathbb{E}_{Q\sim\mc{Q}}\left[QQ^T\right] = I_{m\times m}.$
\end{assumption}
\noindent The primary assumption on the objective
$f:\reals^n \to \reals$ follows:
\begin{assumption}\label{asu:q-smooth}
The function $f : \mathbb{R}^n \to \reals$ is differentiable, bounded from below by $f^*$, and, for some $\mathbb{L} \succeq 0$, satisfies the $\mathbb{L}$-smoothness criteria \eqref{smoothness}. The constant $P\geq 0$ is an upper bound on the $\mc{Q}$-norm of  $\mathbb{L}$. That is,
 $\left\|\mathbb{L}\right\|_{\mc{Q}} \leq P.$
\end{assumption}
\noindent If a function is $L$-smooth (i.e., has  an $L$-Lipschitz continuous gradient), we can take $P=\|I\|_{\mc{Q}}L$. In some cases a sharper bound on $P$ is possible by considering the distribution $\mc{Q}$ in the context of the problem structure; see Example \eqref{ex:logreg} for the case of logistic regression.
\begin{figure}[t]
\centering
\includegraphics[width=0.5\columnwidth]{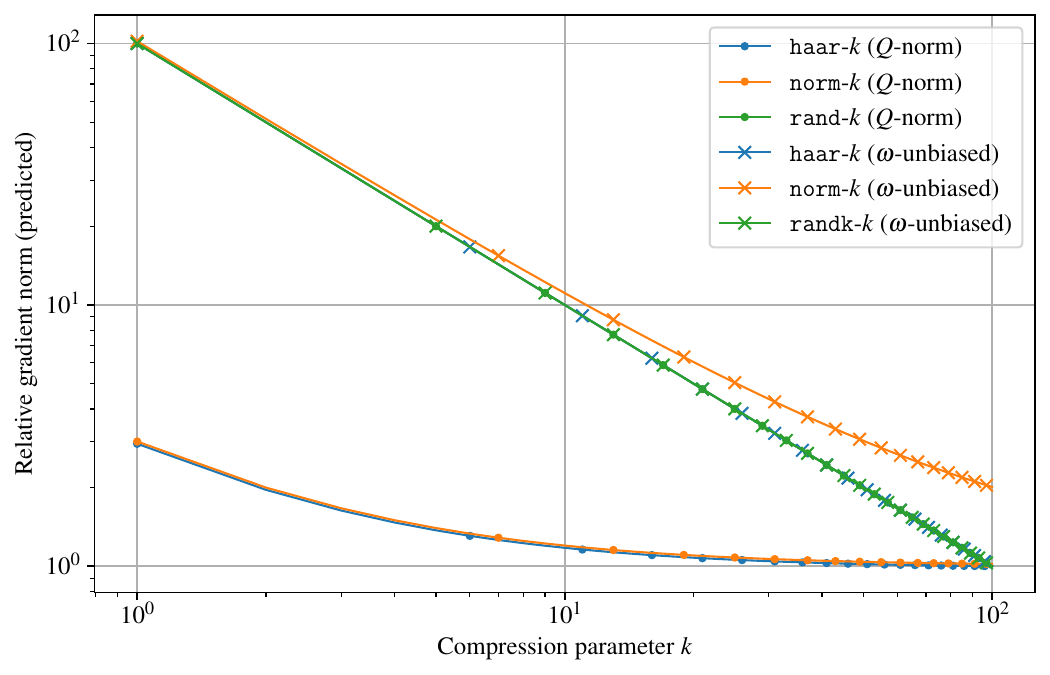}
\caption{Predicted convergence penalty vs compression parameter, for analyses based on the $\mc{Q}$-norm, and a problem-independent approach, in the case of a binary logistic regression problem. \label{fig:boundcompare}\vspace{-2em}}
\end{figure} The final assumption asserts that the stochastic gradients are unbiased with  bounded variance.
\begin{assumption}\label{asu:var}
There is a $\sigma\geq 0$ such that  $\mathbb{E}[g_t - \nabla f(x_t)] =0$ and $\mathbb{E}\left[\|\nabla f(x_t) - g_t\|^2\right] \leq \sigma^2$ for all $t\geq 1$.
\end{assumption}
\noindent We  now state our result on compression performance.
\begin{proposition}\label{prop:noncvx}
Let Assumptions \ref{asu:unbiased}, \ref{asu:q-smooth} and \ref{asu:var} hold. Let $\epsilon$ be 
$\epsilon = \min\{\frac{1}{P}, \frac{1}{\sigma\sqrt{N}}\}.$
Then the iterates 
of Algorithm \eqref{algo1} satisfy
$\frac{1}{N}\sum_{t=1}^{N}\mathbb{E}[ \left\|\nabla f(x_t)\right\|^2] 
\leq   \frac{2DP}{N} + \left(D + \frac{P}{2}\right)\frac{2\sigma}{\sqrt{N}}.$
\end{proposition}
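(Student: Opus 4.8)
The plan is to run the standard non-convex SGD descent argument, but using the matrix smoothness inequality \eqref{smoothness} in place of the usual scalar Lipschitz bound, so that the second-order term is controlled by the $\mc{Q}$-seminorm of $\mathbb{L}$ rather than by a Lipschitz constant. First I would apply \eqref{smoothness} with $y = x_t$ and $x = x_{t+1}$ and substitute the update $x_{t+1} - x_t = -\epsilon Q_t Q_t^T g_t$ from \eqref{q-formula}, obtaining the per-step bound
\[
f(x_{t+1}) \leq f(x_t) - \epsilon\, \nabla f(x_t)^T Q_t Q_t^T g_t + \tfrac{\epsilon^2}{2}\, g_t^T Q_t Q_t^T \mathbb{L} Q_t Q_t^T g_t.
\]

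Next I would condition on $x_t$ and take expectation over the independent randomness in $Q_t$ and $g_t$. For the linear term, independence of $Q_t$ and $g_t$ together with $\mathbb{E}[Q_t Q_t^T] = I$ (Assumption \ref{asu:unbiased}) and $\mathbb{E}[g_t] = \nabla f(x_t)$ (Assumption \ref{asu:var}) collapse it to $-\epsilon\|\nabla f(x_t)\|^2$. The key step is the quadratic term: taking the expectation over $Q_t$ first yields $g_t^T \mathbb{E}[QQ^T \mathbb{L} QQ^T] g_t$, and since $\mathbb{E}[QQ^T\mathbb{L}QQ^T]$ is symmetric positive semidefinite with spectral norm exactly $\|\mathbb{L}\|_{\mc{Q}} \leq P$ by the definition \eqref{q-seminorm} and Assumption \ref{asu:q-smooth}, this is at most $P\|g_t\|^2$. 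Finally, $\mathbb{E}[\|g_t\|^2] = \|\nabla f(x_t)\|^2 + \mathbb{E}[\|g_t - \nabla f(x_t)\|^2] \leq \|\nabla f(x_t)\|^2 + \sigma^2$ by Assumption \ref{asu:var}.

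Combining these gives $\mathbb{E}[f(x_{t+1}) \mid x_t] \leq f(x_t) - \epsilon(1 - \tfrac{\epsilon P}{2})\|\nabla f(x_t)\|^2 + \tfrac{\epsilon^2 P \sigma^2}{2}$. Since $\epsilon \leq 1/P$ we have $1 - \tfrac{\epsilon P}{2} \geq \tfrac12$, so the gradient coefficient is at least $\epsilon/2$. Rearranging, taking full expectations, and telescoping over $t=1,\dots,N$ (using $f(x_1) - \mathbb{E}[f(x_{N+1})] \leq D$) produces $\tfrac{1}{N}\sum_{t=1}^N \mathbb{E}[\|\nabla f(x_t)\|^2] \leq \tfrac{2D}{\epsilon N} + \epsilon P \sigma^2$. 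The last step is to substitute $\epsilon = \min\{1/P, 1/(\sigma\sqrt{N})\}$: bounding $1/\epsilon \leq P + \sigma\sqrt{N}$ in the first term and $\epsilon \leq 1/(\sigma\sqrt{N})$ in the second gives $\tfrac{2DP}{N} + (2D + P)\tfrac{\sigma}{\sqrt{N}}$, which is the claimed bound after rewriting $(2D+P)\tfrac{\sigma}{\sqrt{N}} = (D + \tfrac{P}{2})\tfrac{2\sigma}{\sqrt{N}}$.

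I expect the only non-routine step to be the quadratic term: one must recognize that the inner expectation over the compression matrix is precisely the matrix appearing in the $\mc{Q}$-seminorm \eqref{q-seminorm}, verify its positive semidefiniteness so that $g^T M g \leq \|M\|\,\|g\|^2$ is legitimate, and thereby replace what would be a Lipschitz constant by $\|\mathbb{L}\|_{\mc{Q}} \leq P$. Everything else is the textbook telescoping-plus-stepsize bookkeeping, with mild care needed to bound the $\min$ in $\epsilon$ cleanly via $1/\epsilon \leq P + \sigma\sqrt{N}$.
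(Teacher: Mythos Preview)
Your proposal is correct and follows essentially the same route as the paper's proof: apply $\mathbb{L}$-smoothness, use Assumption~\ref{asu:unbiased} to collapse the linear term, bound the quadratic term via $\|\mathbb{L}\|_{\mc{Q}} \leq P$ to obtain the one-step recursion $\mathbb{E}[f(x_{t+1})] \leq f(x_t) - \epsilon(1-\tfrac{\epsilon P}{2})\|\nabla f(x_t)\|^2 + \tfrac{\epsilon^2 P\sigma^2}{2}$, then telescope and substitute the stepsize using $1/\epsilon \leq P + \sigma\sqrt{N}$. The only cosmetic differences are that the paper splits $g_t = \nabla f(x_t) + \delta_t$ before applying the $\mc{Q}$-norm bound (whereas you bound by $P\|g_t\|^2$ first and then expand), and the paper carries the factor $1/(1-\epsilon P/2)$ to the end before bounding it by $2$, while you use $1-\epsilon P/2 \geq 1/2$ immediately; neither affects the argument or the final constants.
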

\begin{proof}
Using \eqref{q-formula} and $\mathbb{L}$-smoothness, we have
\begin{align*} 
\mathbb{E}[f(x_{t+1})] \leq f(x_t) &- \epsilon \mathbb{E}[\nabla f(x_t)^T Q_tQ_t^T g_t] \\&+ \frac{\epsilon^2}{2}\mathbb{E}[g_t^T Q_tQ_t^T \mathbb{L} Q_tQ_t^Tg_t] 
\end{align*}
Writing $g_t=\nabla f(x_t) +\delta_t$, where $\delta_t = g_t - \nabla f(x_t)$ is a mean-zero error term, and using the bound on $\|\mathbb{L}\|_{\mc{Q}}$,
\begin{align*}
\mathbb{E}[g_t^T Q_tQ_t^T \mathbb{L} Q_tQ_t^Tg_t] 
&=
\mathbb{E}[\nabla f(x_t)^TQ_tQ_t^T\mathbb{L}Q_tQ_t^T\nabla f(x_t) ] \\&\quad+ \mathbb{E}[\delta_t Q_tQ_t^T\mathbb{L}Q_tQ_t^T\delta_t] \\
&\leq \mathbb{E}[\|\nabla f(x_t)\|^2]P +\sigma^2 P.
\end{align*}
Combining the above with Assumption \ref{asu:unbiased},
\begin{align*}
\mathbb{E}[f(x_{t+1})] \leq \mathbb{E}[f(x_t)]\hspace{-0.2em} -\hspace{-0.1em} \epsilon( 1 - \frac{\epsilon}{2}P)\,\mathbb{E}[\|\nabla f(x_t)\|^2] +\hspace{-0.2em}\frac{\epsilon^2}{2}\sigma^2 P
\end{align*}
From here we proceed as in \cite{ghadimi-lan}.
Rearranging and simplifying,
\begin{align*}
 \epsilon (
 1 -\frac{\epsilon}{2}P  
 )\,\mathbb{E}[\|\nabla f(x_t)\|^2]   &\leq \mathbb{E}[f(x_t)] -\mathbb{E}[f(x_{t+1})] + 
 \frac{\epsilon^2}{2}\sigma^2 P
\end{align*}
Summing from $t=1\hdots N$,  and dividing by $N$ gives
\begin{align*}
\frac{1}{N}\sum\limits_{t=1}^{N}
 \epsilon (
  1 - \frac{\epsilon}{2}P 
 )\,\mathbb{E}[\|\nabla f(x_{t})\|^2]   &\leq \frac{D}{N}  + 
\frac{\epsilon^2}{2}\sigma^2 P
\end{align*}
where $D=f(x_1)-f^*$. 
Noting that
$\frac{1}{\epsilon} 
\leq P + \sigma\sqrt{N},$
we obtain
\begin{align*}
&\frac{1}{N}\sum\limits_{t=1}^{N}\mathbb{E}[\|\nabla f(x_t)\|^2]
\leq \frac{D}{N\epsilon}\frac{1}{1-\epsilon P/2}  + 
 \frac{P}{2}\sigma^2\epsilon\frac{1}{1-\epsilon P/2}  \\
 &\quad\leq
  \frac{D}{N}\frac{1}{1- \epsilon P /2 }(P+ \sigma\sqrt{N})  + 
 \frac{P}{2}\sigma^2\frac{1}{\sigma\sqrt{N}}\frac{1}{1-\epsilon P /2}  
   \\
  &\quad\leq
   \frac{1}{N}\left(
2 DP
 \right)
 +
 \frac{2\sigma}{\sqrt{N}}(
  D+ \frac{P}{2}) . \quad \qedhere
\end{align*}
\end{proof}
\noindent 
Asymptotically, this represents a 
$\mc{O}(\frac{P}{N} + \frac{\sigma P}{\sqrt{N}})$ convergence rate, which is consistent with existing upper-bounds on non-convex SGD \cite{ghadimi-lan}, but with the $\|\mathbb{L}\|_{\mc{Q}}$-smoothness constant replacing the usual gradient smoothness term. As $\sigma$ tends to zero in Prop.  \ref{prop:noncvx}, we recover a result for exact gradient descent with compression, as presented below in Corollary \ref{prop:non-convex-no-noise}. 

\begin{corollary}\label{prop:non-convex-no-noise}
Let Assumptions \ref{asu:unbiased}, \ref{asu:q-smooth} and \ref{asu:var} hold with $\sigma=0$.
Let $\epsilon = \frac{1}{P}$.
Then 
$
\frac{1}{N}
\sum_{t=1}^{N}\mathbb{E}[\|\nabla f(x_t)\|^2] \leq 
\frac{2DP}{N}.
$
\end{corollary}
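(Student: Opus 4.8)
The plan is to recognize Corollary \ref{prop:non-convex-no-noise} as the noiseless specialization of Proposition \ref{prop:noncvx}. Formally, setting $\sigma = 0$ makes the step-size $\epsilon = \min\{\frac{1}{P}, \frac{1}{\sigma\sqrt{N}}\}$ collapse to $\frac{1}{P}$ (the second argument diverges), and the residual term $(D + \frac{P}{2})\frac{2\sigma}{\sqrt{N}}$ in the bound of Proposition \ref{prop:noncvx} vanishes, leaving exactly $\frac{2DP}{N}$. So at the level of the \emph{statements}, the corollary is immediate. The only caveat is that the final closed-form bound in the proof of Proposition \ref{prop:noncvx} is obtained by dividing through by $\sigma$ (in the step $\frac{P}{2}\sigma^2\frac{1}{\sigma\sqrt{N}}$), which is not licensed when $\sigma = 0$; to keep the argument clean I would re-derive the bound directly rather than invoke the degenerate formula.

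Concretely, I would restart from the per-iteration descent inequality already established midway through the proof of Proposition \ref{prop:noncvx},
\[
\mathbb{E}[f(x_{t+1})] \leq \mathbb{E}[f(x_t)] - \epsilon\Bigl(1 - \tfrac{\epsilon}{2}P\Bigr)\mathbb{E}[\|\nabla f(x_t)\|^2] + \tfrac{\epsilon^2}{2}\sigma^2 P,
\]
which holds for any admissible step-size and uses only Assumptions \ref{asu:unbiased}, \ref{asu:q-smooth}, and \ref{asu:var}. Under Assumption \ref{asu:var} with $\sigma = 0$ the final term drops, and I would then rearrange, sum over $t = 1, \dots, N$, and divide by $N$, using $D = f(x_1) - f^*$, to obtain $\frac{1}{N}\sum_{t=1}^N \epsilon(1 - \frac{\epsilon}{2}P)\mathbb{E}[\|\nabla f(x_t)\|^2] \leq \frac{D}{N}$.

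The last step is to substitute $\epsilon = \frac{1}{P}$, which gives the coefficient $\epsilon(1 - \frac{\epsilon}{2}P) = \frac{1}{P}(1 - \frac{1}{2}) = \frac{1}{2P}$, so that multiplying both sides by $2P$ yields $\frac{1}{N}\sum_{t=1}^N \mathbb{E}[\|\nabla f(x_t)\|^2] \leq \frac{2DP}{N}$, as claimed. I do not anticipate any genuine obstacle here: the content is entirely contained in the parent proposition, and the only thing to watch is the degeneracy of the $\min$ in the step-size together with the division-by-$\sigma$ in the parent's closing computation, both of which are sidestepped by stopping at the intermediate descent inequality before specializing $\epsilon$.
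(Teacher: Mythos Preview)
Your proposal is correct and matches the paper's approach: the paper simply states that Corollary~\ref{prop:non-convex-no-noise} is recovered by letting $\sigma\to 0$ in Proposition~\ref{prop:noncvx}, without a separate proof. Your observation about the division by $\sigma$ and your fix via the intermediate descent inequality are accurate and in fact more careful than the paper itself.
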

\noindent We use this corollary to illustrate some features of the bounds one can obtain in the example of binary logistic regression.
\begin{example}[Binary logistic regression]\label{ex:logreg}
Here, $f:\reals^m\to\reals$ is $f(x) = \frac{1}{B}{\textstyle\sum}_{i=1}^{B}f_i(x)$, where each $f_i$ is defined as follows:
$f_i(x) =  h(y_i x^T z_i)$
where $h$ is the the log softmax function $h(w) = \log\left(1+e^{-w}\right)$, $z_i$ is a feature vector and $y_i \in \{-1,1\}$ is the target label for the $i$th example. For the purposes of this example, we assume $\|z_i\|_2 \leq 1$. The function $f$ is bounded from below by zero and we may take $D = f(x_1)$. 
Noting that $|h''(w)| \leq \frac{1}{4}$ for all $w$, it can be shown (Lemma 1 in \cite{Safaryan2021Dec}) that $f$ is $\mathbb{L}$-smooth, where $\mathbb{L}$ is the matrix $\mathbb{L} = \frac{1}{4}\frac{1}{B}\sum_{i=1}^{B}z_i z_i^T.$
One can use properties of $\mathbb{L}$ to potentially bound $\|\mathbb{L}\|_{\mc{Q}}$ and thereby obtain bounds on optimization performance. In this example we demonstrate one such derivation to illustrate the ability of this approach to differentiate among the compressors and incorporate problem information. Primarily, we use that $\|\mathbb{L}\| \leq \frac{1}{4}$, $\tr \mathbb{L} = \frac{1}{4}$ and $\|\diag \mathbb{L}\| \leq \frac{1}{4}$. Plugging these values into the formulas for the $\mc{Q}$-norms in Proposition \ref{prop:q}, we obtain the following upper bounds for $\|\mathbb{L}\|_{\mc{Q}}$:
\begin{align*}
\|\mathbb{L}\|_{\haark{}} &\leq 
\frac{m}{k}\left[ (1-\beta)\|\mathbb{L}\| + \beta \frac{\tr \mathbb{L}}{m}\right] 
\leq 
\frac{1}{4}\frac{m}{k}\frac{k+2}{m+2},\\
\|\mathbb{L}\|_{\randk{}} &\leq \frac{m}{k}\left[ \frac{k-1}{m-1}\|\mathbb{L}\| + \frac{m-k}{m-1}\max_{i}\mathbb{L}_{i,i}\right] \leq \frac{1}{4}\frac{m}{k},\\ 
\|\mathbb{L}\|_{\normk{}} &\leq  \frac{m}{k}\left[ \frac{k+1}{m}\|\mathbb{L}\|+ \frac{\tr \mathbb{L}}{m}\right] \leq \frac{1}{4}\frac{k+2}{k}.
\end{align*}
These estimates for $\|\mathbb{L}\|_{\mc{Q}}$ may in turn be combined with Corollary \ref{prop:non-convex-no-noise} to obtain bounds on optimization performance. These bounds are plotted in Figure \ref{fig:boundcompare}. They have been uniformly scaled by dividing the curves by the corresponding upper bound bound that Corollary \ref{prop:non-convex-no-noise} would provide in the absence of compression, which is $\frac{D}{2N}$. On the other hand, if no problem information is available aside from $\|\mathbb{L}\| \leq \frac{1}{4}$, we can obtain performance bounds by combining Corollary \ref{prop:non-convex-no-noise} with the the inequality $\|\mathbb{L}\|_{\mc{Q}} \leq \|I\|_{\mc{Q}}\|\mathbb{L}\|$, essentially ignoring the interaction of the problem and the compressor. For this case, we use that $\|I\|_{\haark{}} =\|I\|_{\randk{}} = \frac{m}{k}$, while $\|I\|_{\normk{}} = \frac{m+k+1}{k}$. Bounds derived from these inequalities are included in Fig. \ref{fig:boundcompare}, and correspond to the $\omega$-unbiased curves.

We highlight two observations from Figure \eqref{fig:boundcompare}. Firstly, for $\haark{}$, and $\normk{}$, the performance guarantee is significantly better when analyzed using the $\mc{Q}$-norm, especially at small values of $k$. The second is that the $\mc{Q}$-norm analysis provides predictions that are qualitatively different: Under the $\omega$-unbiased analysis, $\normk{}$ is predicted to be the worst performer and $\haark{}$ is predicted to be on-par with $\randk{}$, while in the $\mc{Q}$-norm analysis, both $\haark{}$ and $\normk{}$ are predicted to perform significantly better than $\randk{}$.
 \qed
\end{example}
\vspace{-1em}
\section{Experimental results}
We present experimental results on least squares and logistic regression problems. The aim of these experiments is to investigate whether the bounds derived above correlate with real-world performance. The experiments confirm the theory above: As a rule, $\randk$ is the worst performer among the three compression schemes, while $\normk$ and $\haark$ have similar performance, with $\haark$ having an advantage. 
\subsubsection{Linear regression}
 The task is to learn regression coefficients that map an $m=100$ dimensional vector to a scalar. The number of input/output pairs is $10$ and they are randomly sampled from a Gaussian distribution on $\mathbb{R}^{100+1}$. In these experiments we use Corollary \ref{prop:non-convex-no-noise} to compute the step-sizes that minimize the upper bound on performance. The results are presented in the top row of Figure  \ref{fig:results}. Each plot shows a comparison of non-compressed and compressed training at various values of $k$. 
On the top left, we plot the ratio of the (squared) gradient norm returned by compressed and non-compressed training after a fixed number of steps $N=40$. Due to the lossy nature of compression, non-compressed training finds a parameter with a smaller gradient  and the gap between compressed and non-compressed training narrows as the compression parameter $k$ is increased. In this case, we can exactly calculate the right-hand side of Corollary \ref{prop:non-convex-no-noise} for both compressed and non-compressed training, to obtain an estimate of the relative performance of each compression scheme. This is indicated by '$\times$', and we see that the predicted performance comparison is within the error bars of performance in each case. On the top right we compare the algorithms by examining the number of iterations needed by compressed and non-compressed optimization to find a point with a small gradient, defined as $\|\nabla f(x)\|^2 < 10^{-3}$. Specifically, we plot the ratio of the number of iterations needed with and without compression. We find that the relative performance of the compression schemes follows a similar pattern to what was observed with the relative gradient norm. For reference, non-compressed optimization took an average of $N=15$ iterations to meet this criteria.
In these plots, the error bars represent one standard deviation of performance over $500$ random datasets.

\begin{figure}[t]

\centering
\includegraphics[width=0.475\columnwidth]{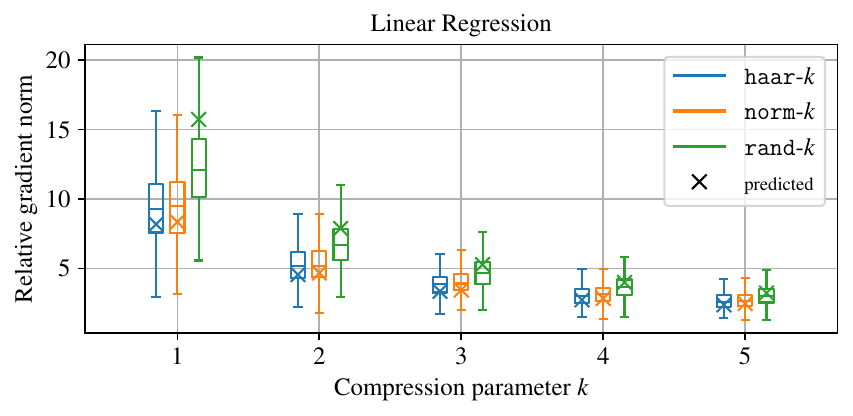}
\quad
\includegraphics[width=0.475\columnwidth]{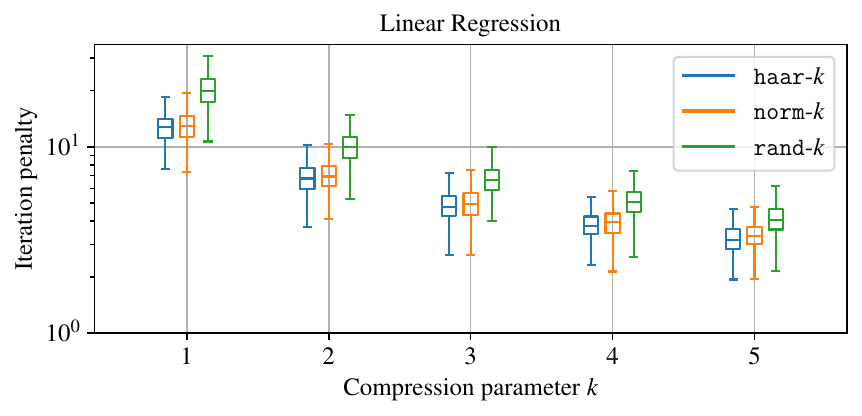}\\\includegraphics[width=0.475\columnwidth]{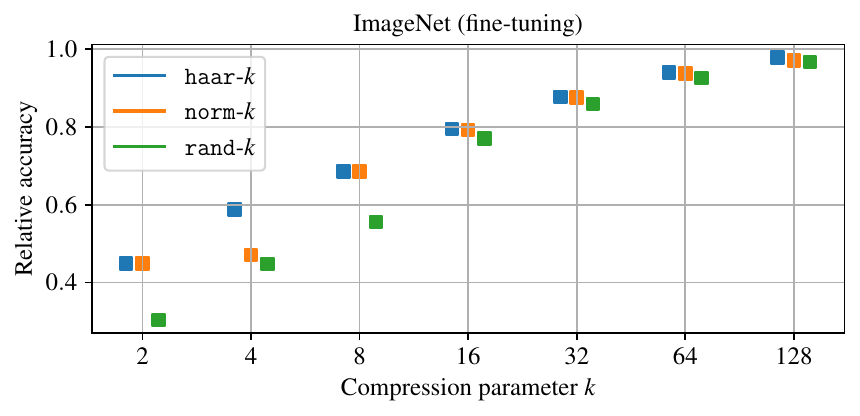}
\quad
\includegraphics[width=0.475 \columnwidth]{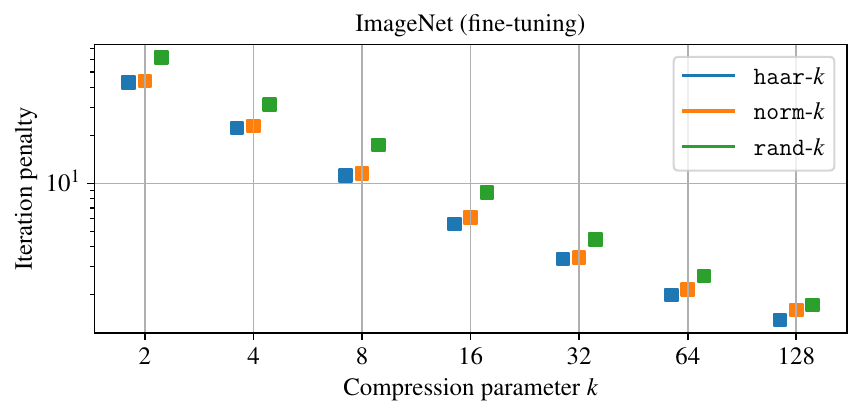}
\vspace{-1em}
\caption{Progress of optimization with and without compression.
The top plots indicate performance on a linear regression task, while the bottom plots show performance on a neural network training task. We omit error bars in the ImageNet experiments due to the low variability of the results.
\vspace{-1.5em}\label{fig:results}}
\end{figure}

\subsubsection{ImageNet fine-tuning}
As a second example, we considered fine-tuning the last layer of a ResNet-18 model for image classification \cite{he2016deep}. The initial model was pre-trained without compression on the ImageNet-21k dataset, after removing the 1000 classes that determine the ImageNet-1k dataset \cite{ILSVRC15}.  For fine-tuning, we take the pre-trained model and train the last layer to classify images from ImageNet-1k, using the standard training/validation split for that dataset. This fine-tuning step is equivalent to multinomial logistic regression, with input features determined by the activations at the penultimate layer of a ResNet model.  For each $k \in \{2^{i}\}_{1 \leq i \leq 7} $ and each compression type, including no compression, we identified the best learning rate via grid search in the set $\eta \in \{ 10^{-2 +3i/14}\}_{0 \leq i \leq 14}$. After the  learning rate is determined, we repeat each experiment three times. The batch size was $4096$, and hence our experiments model a scenario where $\ell$ nodes collaborate by providing gradients from a local minibatch of size $4096/\ell$. We plot two indicators of performance in the bottom row of Figure \ref{fig:results}. The bottom left compares the accuracy obtained by compressed and non-compressed training, for the various compressors and values of $k$, after running for a fixed number of iterations $N=840$. The $y$-axis represents the ratio of the accuracy of the compressed and non-compressed model. We see that at each $k$, the ranking of compression types agrees with what was observed for linear regression. 
The bottom right plot indicates the relative number of iterations needed to reach 50\% test accuracy by compressed and non-compressed optimization. That is, at each $k$ we indicate the iteration penalty for the compression schemes to obtain 50\% accuracy.\footnote{For reference, in these experiments an average of $N=470$ iterations were needed for non-compressed training to reach the target accuracy.}
As with linear regression, we find that $\haark{}$ has an advantage over $\normk{}$, while $\randk{}$ performs the worst out of the three.
\vspace{-0.7em}
\section{Conclusion}
In this paper, we explored the relationship between the performance of compressors and problem structure. We looked at random linear compression schemes and determined upper bounds on optimization performance in terms of problem data. These data included properties of the smoothness matrix $\mathbb{L}$, such as the norm and the trace. We believe that the results discussed here at not limited to random compression, but can be used to gain insight into more complex schemes such as PowerSGD and other approaches utilizing error-feedback.
\vspace{-0.7em}
\section*{Acknowledgement} 
This research was supported by the U.S. Department of Energy's Office of Science 
under Contract No. DE-SC0012704, and used resources of the National Energy Research Scientific Computing Center (NERSC), a Department of Energy Office of Science User Facility using NERSC award ASCR-ERCAP0027539.
The authors also acknowledge Roxana Geambasu (Columbia University) and Pierre Tholoniat (Columbia University) for helpful discussions. In particular, P. Tholoniat developed the initial model used in the fine-tuning experiments.
\vspace{-2em}
\bibliography{main}
\bibliographystyle{IEEEtran}
\appendices
\makeatletter
\renewcommand{\section}[1]{%
  \refstepcounter{section}%
}
\makeatother

\section{\label{apx:aux}}
\begin{proposition}\label{proof-rand-k}
Let $Q$ have the distribution resulting from the removal of $m-k$ random columns from the identity matrix $I_{m\times m}$.
(That is, $Q$ is defined like the \randk{} matrices from Definition \ref{def:mat}, without the scaling factor.)
Let $A$ be any matrix. Then
 $\|\mathbb{E}[QQ^TAQQ^T]\|  =
  \frac{k}{m}\left\|
\frac{k-1}{m-1}A+  \frac{m-k}{m-1}\diag A\right\|.$
\end{proposition}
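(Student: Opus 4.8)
The plan is to exploit the very special structure of $Q$: its columns are a uniformly random size-$k$ subset of the standard basis vectors, so $QQ^T$ is simply a coordinate projection. Concretely, if $S \subseteq \{1,\dots,m\}$ denotes the random set of $k$ retained indices, then $QQ^T = \sum_{j \in S} e_j e_j^T = \diag(\mathbf{1}_S)$, a diagonal $0/1$ matrix. Because $QQ^T$ is diagonal, the triple product $QQ^T A QQ^T$ can be read off entrywise with no mixing of coordinates: $(QQ^T A QQ^T)_{ab} = \mathbf{1}[a \in S]\,\mathbf{1}[b \in S]\, A_{ab}$. Establishing this entrywise identity is the structural heart of the argument, after which everything is a computation of probabilities.

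First I would take expectations entrywise. The only randomness is in the indicator functions, so the problem reduces to inclusion probabilities for a uniformly random $k$-subset of $\{1,\dots,m\}$. For a diagonal entry, $\mathbb{P}[a \in S] = k/m$; for an off-diagonal entry with $a \neq b$, $\mathbb{P}[a \in S,\, b \in S] = \binom{m-2}{k-2}/\binom{m}{k} = \frac{k(k-1)}{m(m-1)}$. Hence the expected matrix has diagonal entries $\frac{k}{m} A_{aa}$ and off-diagonal entries $\frac{k(k-1)}{m(m-1)} A_{ab}$.

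Next I would reassemble this into closed form by splitting $A = \diag A + (A - \diag A)$. Writing the expected matrix as $\frac{k(k-1)}{m(m-1)} A$ plus a purely diagonal correction, the coefficient multiplying $\diag A$ is $\frac{k}{m} - \frac{k(k-1)}{m(m-1)} = \frac{k(m-k)}{m(m-1)}$. Factoring out $\frac{k}{m}$ then yields $\mathbb{E}[QQ^T A QQ^T] = \frac{k}{m}\bigl[\frac{k-1}{m-1} A + \frac{m-k}{m-1}\diag A\bigr]$, and taking the operator norm of both sides gives the claimed identity.

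There is no serious obstacle here; the argument is elementary once one recognizes that $QQ^T$ is a diagonal projection. The only points requiring care are the combinatorial inclusion probability for pairs and the algebraic simplification of the diagonal coefficient to the clean form $\frac{m-k}{m-1}$. As a sanity check I would verify the edge cases: at $k=m$ the projection is the identity and the formula correctly collapses to $\|A\|$, while at $k=1$ it reduces to $\frac{1}{m}\|\diag A\|$, consistent with projecting onto a single random coordinate.
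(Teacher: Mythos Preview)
Your proposal is correct and essentially identical to the paper's proof: both observe that $QQ^T$ is the diagonal indicator of the retained index set, compute $(QQ^TAQQ^T)_{i,j}=\mathbf{1}[i\in S]\mathbf{1}[j\in S]A_{i,j}$, evaluate the single and pairwise inclusion probabilities $k/m$ and $\frac{k(k-1)}{m(m-1)}$, and reassemble the result as $\frac{k}{m}\bigl[\frac{k-1}{m-1}A+\frac{m-k}{m-1}\diag A\bigr]$ before taking norms. The only cosmetic difference is that the paper phrases the entrywise multiplication as a Hadamard product $M\circ A$, whereas you split $A$ into its diagonal and off-diagonal parts directly.
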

\begin{proof}

The random matrix $QQ^T$ is made from taking the identity matrix and setting to zero all columns that are not in the chosen set of parameters to update. Let $I$ be the indices of columns that are non-zero.
We have $(QQ^T)_{i,i} = 1_{i\in I}$ and $\mathbb{E}[(QQ^T)_{i,i}] = p(i \in I)$ and
$(QQ^TAQQ)_{i,j}= 1_{i \in I, j \in I}A_{i,j}$.
Note that $p(i \in I) = \frac{k}{m}$. \footnote{We can show this by induction:
Let $p_{j}$ be the probability that $i$ is in a random interval of size $j$. Clearly $p_1= \frac{1}{m}$. For the induction, assume $p_{k-1} = \frac{k-1}{m}$. Then
$p_k = p_{k-1} + (1-p_{k-1})\frac{1}{m-(k-1)} 
= \frac{k}{m}$.} The probability $p( i \in I, j \in I)$ can be calculated according to the hypergeometric distribution:
\begin{align*}
p(i \in I\, , \,j \in I) &= \begin{cases}
\frac{k}{m} &\text{ if } i=j,  \\
\frac{k}{m}\frac{k-1}{m-1} &\text{ if } i \neq j.
\end{cases} 
\end{align*}
So $\mathbb{E}[QQ^T A QQ^T] = M\circ A$ where 
$M_{i,j} = \frac{k}{m}(\delta_{i,j} + (1-\delta_{i,j})\frac{k-1}{m-1}) =
\frac{k}{m}(\frac{k-1}{m-1} + (1-\frac{k-1}{m-1})\delta_{i,j})$. 
Expanding the definition of $M$, we see that 
$M\circ A = \frac{k}{m}\frac{k-1}{m-1}A +  \frac{k}{m}(1-\frac{k-1}{m-1})\diag A.$
Taking the norm gives our result.
\end{proof}

\begin{proposition}[\cite{meckes_2019}]\label{rand-orth-res}
    If $Q$ is distributed according to the Haar measure on $\reals^{m\times m}$ then    $\mathbb{E}[Q_{1,1}] = \mathbb{E}[Q_{1,1}Q_{2,1}] = 0$ and%
  \begin{subequations}
  \begin{align}
    \label{p2}
    \mathbb{E}[Q_{1,1}^2] &= \frac{1}{m},\\
    \label{p3}
    \mathbb{E}[Q_{1,1}^4]  &= \frac{3}{m(2+m)},\\
    \label{p4}
    \mathbb{E}[Q_{1,1}^2Q_{2,1}^2] &= \frac{1}{m(m+2)},\\
    \label{p5}
    \mathbb{E}[Q_{1,1}^2Q_{2,2}^2] &= \frac{m+1}{(m-1)m(m+2)},\\
    \label{p6}
    \mathbb{E}[Q_{1,1}Q_{1,2}Q_{2,1}Q_{2,2}] &= -\frac{1}{(m-1)m(m+2)}.
    \end{align}
    \end{subequations}
\end{proposition}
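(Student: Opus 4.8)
The plan is to derive every formula from two structural properties of the Haar measure: invariance under left- and right-multiplication by an arbitrary orthogonal matrix, and the orthonormality of the columns (and rows) of $Q$. First I would dispose of the odd moments. Left-multiplying $Q$ by the orthogonal matrix $\diag(-1,1,\dots,1)$ flips the sign of the first row while leaving the distribution of $Q$ unchanged, so $Q_{1,1} \overset{d}{=} -Q_{1,1}$ and $Q_{1,1}Q_{2,1} \overset{d}{=} -Q_{1,1}Q_{2,1}$, forcing $\mathbb{E}[Q_{1,1}] = \mathbb{E}[Q_{1,1}Q_{2,1}] = 0$. The same sign-flip and row/column permutation arguments (permutation matrices being orthogonal) show that all entries are identically distributed, and that only index patterns invariant under an even number of sign changes can have nonzero expectation; this is the bookkeeping principle I would reuse throughout.

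Next I would establish the three single-column moments \eqref{p2}--\eqref{p4}. Since the first column of a Haar matrix is uniform on the unit sphere $S^{m-1}$, it suffices to compute $\mathbb{E}[u_1^2]$, $\mathbb{E}[u_1^4]$, and $\mathbb{E}[u_1^2 u_2^2]$ for $u$ uniform on $S^{m-1}$. The constraint $\sum_i u_i^2 = 1$ with permutation symmetry gives $m\,\mathbb{E}[u_1^2] = 1$; squaring it gives $m\,\mathbb{E}[u_1^4] + m(m-1)\,\mathbb{E}[u_1^2 u_2^2] = 1$; and rotating by $45^\circ$ in the $(1,2)$-plane and matching fourth moments yields the second relation $\mathbb{E}[u_1^4] = 3\,\mathbb{E}[u_1^2 u_2^2]$, whence $\mathbb{E}[u_1^4] = \frac{3}{m(m+2)}$ and $\mathbb{E}[u_1^2 u_2^2] = \frac{1}{m(m+2)}$. (Alternatively, writing $u = g/\|g\|$ for a standard Gaussian $g$ and dividing the Gaussian moments by $\mathbb{E}[\|g\|^4] = m(m+2)$ gives the same values.) Because $Q^T$ is also Haar-distributed, the identical formulas hold for the entries of any single row.

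The substantive step is the two-column moments \eqref{p5} and \eqref{p6}, which I would obtain by feeding the row moments into the column-orthonormality constraints. For \eqref{p5}, I multiply $\sum_i Q_{i,1}^2 = 1$ by $Q_{2,2}^2$ and take expectations: permutation invariance collapses the $m$ terms into $(m-1)$ copies of the distinct-row/distinct-column moment $\mathbb{E}[Q_{1,1}^2 Q_{2,2}^2]$ plus one same-row moment $\mathbb{E}[Q_{2,1}^2 Q_{2,2}^2] = \frac{1}{m(m+2)}$, while the right-hand side is $\mathbb{E}[Q_{2,2}^2] = \frac{1}{m}$; solving gives $\frac{m+1}{(m-1)m(m+2)}$. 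For \eqref{p6}, I square the orthogonality constraint $\sum_i Q_{i,1}Q_{i,2} = 0$ and take expectations: the diagonal terms contribute $m\,\mathbb{E}[Q_{1,1}^2 Q_{1,2}^2] = \frac{1}{m+2}$ (another row moment), and the $m(m-1)$ off-diagonal terms are all equal to the target $\mathbb{E}[Q_{1,1}Q_{1,2}Q_{2,1}Q_{2,2}]$, so setting the total to zero yields $-\frac{1}{m(m-1)(m+2)}$.

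The main obstacle is purely the bookkeeping: correctly partitioning the index patterns in each expanded constraint into permutation-equivalence classes, and recognizing which classes reduce to the already-computed single-row/single-column moments and which are the genuinely new two-column moments. The conceptual key is that the column orthonormality relations, once paired with the single-column moments, supply exactly enough linear equations to pin down the higher cross moments, which keeps the computation elementary and avoids invoking explicit Weingarten-calculus formulas for the orthogonal group.
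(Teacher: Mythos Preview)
Your proposal is correct and, for the two moments the paper actually derives, \eqref{p5} and \eqref{p6}, takes essentially the same route: exploit column orthonormality together with permutation/row-exchange symmetry to express the new two-column moment as a linear function of the already-known single-row moment $\mathbb{E}[Q_{1,1}^2Q_{1,2}^2]$. The paper's version of \eqref{p5} averages $\mathbb{E}[Q_{1,1}^2Q_{j,2}^2]$ over $j\neq 1$ and uses $\sum_{j}Q_{j,2}^2=1$, whereas you multiply $\sum_i Q_{i,1}^2=1$ by $Q_{2,2}^2$; for \eqref{p6} the paper averages $\mathbb{E}[Q_{1,1}Q_{1,2}Q_{i,1}Q_{i,2}]$ over $i\neq 1$ and uses $\sum_i Q_{i,1}Q_{i,2}=0$, whereas you square that same orthogonality relation---these are cosmetic rearrangements of the identical idea. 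Where you go slightly beyond the paper is in actually deriving \eqref{p2}--\eqref{p4} (the paper simply cites \cite{meckes_2019}); your $45^\circ$-rotation trick to obtain $\mathbb{E}[u_1^4]=3\,\mathbb{E}[u_1^2u_2^2]$ is a clean elementary substitute for the Gaussian-ratio computation.
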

\begin{proof}
We present the derivation for \eqref{p5}  and \eqref{p6}, and refer the reader to \cite{meckes_2019} for the rest, and in particular  Proposition 2.5 for \eqref{p3} and \eqref{p4}.
By symmetry, the joint distribution of $(Q_{1.1}, Q_{2,2})$ is the same as $(Q_{1,1}, Q_{i,2})$ for any $i\neq 1$. Hence
\begin{align*}
\mathbb{E}[Q_{1,1}^2Q_{2,2}^2] &= 
\mathbb{E}\big[Q_{1,1}^2\frac{1}{m-1}\sum\limits_{j=2}^{m}Q_{j,2}^2\big] \\
&=
\mathbb{E}[Q_{1,1}^2\frac{1}{m-1}(1-Q^2_{1,2})] \\
&=
\frac{1}{m-1}\mathbb{E}[Q_{1,1}^2] - \frac{1}{m-1}\mathbb{E}[Q_{1,1}^2Q_{1,2}^2] 
\end{align*}
In the last step we used \ref{p2} and \ref{p4}. This shows \ref{p5}.
Finally, for \ref{p6} we start with the symmetry that the joint distribution of $Q_{1,1},Q_{1,2},Q_{i,1},Q_{i,2}$ is the same for all $i> 1$:
\begin{align*}
\mathbb{E}[Q_{1,1}Q_{1,2}Q_{2,1}Q_{2,2}] &= 
\frac{1}{m-1}\mathbb{E}\big[Q_{1,1}Q_{1,2}\sum\limits_{i=2}^{m}Q_{i,1}Q_{i,2}\big] \\
&=
\frac{1}{m-1}\mathbb{E}[Q_{1,1}Q_{1,2}(-Q_{1,1}Q_{1,2})] \\
&= -\frac{1}{m-1}\mathbb{E}[Q_{1,1}^2Q_{1,2}^2].
\end{align*}
For the second step above, orthogonality of the columns yields $\sum_{i=2}^{m}Q_{i,1}Q_{i,2} = -Q_{1,1}Q_{1,2}$. Finally,  apply \eqref{p4}.
\end{proof}

\begin{proposition}\label{proof-orth-k}
Let $A$ be an $m\times m$  symmetric matrix. Let $Q$ be sampled from the Haar measure on $m\times k$ matrices. Then
$
\mathbb{E}[QQ^TAQQ^T] 
= \frac{k}{m}(1-\beta) A + \left(\tr A \right)\frac{k}{m^2}\beta I
$
where $\beta$ is defined as in Proposition \ref{prop:q}.
\end{proposition}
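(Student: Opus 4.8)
The plan is to compute $T(A) := \mathbb{E}[QQ^T A QQ^T]$ entrywise and then recognize the result as a linear combination of $A$ and $(\tr A)\,I$. Since $(QQ^T)_{a,c} = \sum_{i=1}^k Q_{a,i} Q_{c,i}$, expanding the matrix product gives
$$T(A)_{a,b} = \sum_{c,d=1}^m A_{c,d} \sum_{i,j=1}^k \mathbb{E}\big[Q_{a,i}Q_{c,i}Q_{d,j}Q_{b,j}\big],$$
so everything reduces to the fourth mixed moments of the entries of a Haar orthogonal matrix, which are exactly the quantities tabulated in Proposition \ref{rand-orth-res} (together with the second moments). The work is therefore purely combinatorial: determine, for each coincidence pattern of the row indices $a,c,d,b$ and the column indices $i,j$, which representative moment from Proposition \ref{rand-orth-res} applies.

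First I would split on whether $i=j$. When $i=j$ all four factors lie in a single column, and by the row/column symmetry of the Haar measure the moment is $\mathbb{E}[Q_{1,1}^4]=\tfrac{3}{m(m+2)}$ when $a=c=d=b$ and $\mathbb{E}[Q_{1,1}^2Q_{2,1}^2]=\tfrac{1}{m(m+2)}$ when the rows split into two coincident pairs; all other patterns vanish. When $i\neq j$ the factors split as a pair in column $i$ (rows $a,c$) and a pair in column $j$ (rows $d,b$); here the nonzero cases are the \emph{square\,$\times$\,square} patterns $a=c,\ d=b$, giving $\mathbb{E}[Q_{1,1}^2Q_{2,2}^2]$ (distinct rows) or $\mathbb{E}[Q_{1,1}^2Q_{1,2}^2]$ (equal rows, equal by transpose symmetry to the same-column value), and the \emph{cross} patterns $\{a,c\}=\{d,b\}$ with $a\neq c$, giving the negative moment $\mathbb{E}[Q_{1,1}Q_{1,2}Q_{2,1}Q_{2,2}]=-\tfrac{1}{(m-1)m(m+2)}$. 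Counting the admissible $(i,j)$ pairs ($k$ choices when $i=j$, $k(k-1)$ when $i\neq j$) then turns each pattern into an explicit coefficient.

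Assembling these contributions, I expect the diagonal entries to collect a term proportional to $A_{a,a}$ and a term proportional to $\tr A$, while the off-diagonal entries $a\neq b$ retain only a multiple of $A_{a,b}$; crucially, the symmetry $A_{c,d}=A_{d,c}$ together with the negative cross-term should make the spurious off-diagonal contributions (those not proportional to $A_{a,b}$) cancel, leaving $T(A)=\alpha A+\gamma(\tr A)\,I$. Reading off $\alpha$ and $\gamma$ from the coefficient bookkeeping and simplifying with $\beta=\tfrac{m(m-k)}{(m+2)(m-1)}$ should reproduce $\alpha=\tfrac{k}{m}(1-\beta)$ and $\gamma=\tfrac{k}{m^2}\beta$. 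The main obstacle is exactly this bookkeeping: keeping the index-coincidence cases disjoint and correctly weighted, and verifying the cross-term cancellations rather than double-counting.

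As an independent check (and a much shorter route), I would note that $Q\sim UQ$ for every orthogonal $U$ forces the equivariance $T(UAU^T)=U\,T(A)\,U^T$; since the $O(m)$-module of symmetric matrices splits into the line spanned by $I$ and the irreducible traceless part, Schur's lemma already guarantees $T(A)=\alpha A+\gamma(\tr A)\,I$. The two scalars are then pinned down by evaluating (i) the trace, using $Q^TQ=I_k$ and $\mathbb{E}[QQ^T]=\tfrac{k}{m}I$ to get $\tr T(A)=\tfrac{k}{m}\tr A$, hence $\alpha+m\gamma=\tfrac{k}{m}$, and (ii) the single entry $T(e_1e_1^T)_{1,1}=\mathbb{E}\big[(\sum_i Q_{1,i}^2)^2\big]=\tfrac{k(k+2)}{m(m+2)}$, hence $\alpha+\gamma=\tfrac{k(k+2)}{m(m+2)}$; solving these two linear equations yields the stated $\alpha$ and $\gamma$.
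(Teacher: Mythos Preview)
Your primary approach---expanding $T(A)_{a,b}$ entrywise, splitting on whether the two column indices coincide, and classifying the row-coincidence patterns against the moment table of Proposition~\ref{rand-orth-res}---is exactly what the paper does. The only cosmetic difference is that the paper first reduces by linearity to rank-one $A=vv^T$ with $\|v\|=1$; this makes the symmetry $A_{c,d}=A_{d,c}$ automatic and slightly streamlines the final assembly of coefficients, but is not essential.

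Your second route, the equivariance/Schur's-lemma argument, is genuinely different from the paper and considerably shorter. The paper arrives at the form $\alpha A+\gamma(\tr A)I$ only \emph{a posteriori}, after carrying out the full index-coincidence case analysis for both diagonal and off-diagonal entries. You obtain this form \emph{a priori} from the left $O(m)$-invariance of the Haar measure and the decomposition of symmetric matrices into scalars plus traceless, and then determine $\alpha,\gamma$ from two scalar equations: the trace identity $\tr T(A)=\tfrac{k}{m}\tr A$ (using $Q^TQ=I_k$ so that $(QQ^T)^2=QQ^T$) and the single moment $\mathbb{E}\big[(\sum_i Q_{1,i}^2)^2\big]=\tfrac{k(k+2)}{m(m+2)}$. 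This buys a dramatic reduction in bookkeeping and makes the dependence on the Haar moments minimal; the paper's approach, by contrast, is more elementary in that it avoids any representation-theoretic input and exhibits every cancellation explicitly.
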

\begin{proof}
Since the map $A \mapsto \mathbb{E}[QQ^T A QQ^T]$ is linear, and $A$ is symmetric, it suffices to prove the statement for rank-one matrices of the form $A=vv^T$ where $\|v\|=1$.
Observe that
\begin{equation}\label{q-eqn}
(QQ^TAQQ^T)_{i,j}\hspace{-0.3em}=\hspace{-0.3em}
\sum\limits_{r=1}^{m}\sum\limits_{a=1}^{m}A_{r,a}\hspace{-0.3em}\sum\limits_{d=1}^{k}\sum\limits_{c=1}^{k}
Q_{i,c}Q_{r,c}Q_{a,d}Q_{j,d} 
\end{equation}
For fixed $i,j,r,a$ the inner sum on the right has expectation
\begin{align*}
\sum\limits_{d=1}^{k}\sum\limits_{c=1}^{k}
\mathbb{E}[Q_{i,c}Q_{r,c}Q_{a,d}Q_{j,d}]
&=
k\mathbb{E}[Q_{i,1}Q_{r,1}Q_{a,1}Q_{j,1}] \\&\hspace{-2em}+
k(k-1)
\mathbb{E}[Q_{i,2}Q_{r,2}Q_{a,1}Q_{j,1}]. 
\end{align*}
Above we split up the sum based on the cases $c=d$ and $c\neq d$. Combining this with \eqref{q-eqn} gives
\begin{equation}\label{general-i-j}
\begin{split}
&\mathrlap{\mathbb{E}[(QQ^TAQQ^T)_{i,j}] = k\sum\limits_{r=1}^{m}\sum\limits_{a=1}^{m}A_{r,a}  \mathbb{E}[Q_{i,1}Q_{r,1}Q_{a,1}Q_{j,1}]} \\&\quad+
 k (k-1)\sum\limits_{r=1}^{m}\sum\limits_{a=1}^{m}A_{r,a}
\mathbb{E}[Q_{i,2}Q_{r,2}Q_{a,1}Q_{j,1}]. 
\end{split}
\end{equation}
Let $i\hspace{-0.2em}=\hspace{-0.2em}j$. 
The first term on the right is (ignoring the scalar $k$),
\begin{equation}\label{ii-first}
\begin{split}
&\sum\limits_{r=1}^{m}\sum\limits_{a=1}^{m}A_{r,a}  \mathbb{E}[Q_{i,1}^2Q_{r,1}Q_{a,1}] 
\\&=\sum\limits_{r=1}^{m}A_{r,r}  \mathbb{E}[Q_{i,1}^2Q_{r,1}^2]
+\sum\limits_{r=1}^{m}\sum\limits_{a\neq r}^{m}A_{r,a}  \mathbb{E}[Q_{i,1}^2Q_{r,1}Q_{a,1}] 
\end{split}
\end{equation}
The double sum on the right of \eqref{ii-first} vanishes since
\begin{equation}\label{dbl-ii}
\begin{aligned}
&\sum\limits_{r=1}^{m}\sum\limits_{a\neq r}^{m}A_{r,a}  \mathbb{E}[Q_{i,1}^2Q_{r,1}Q_{a,1}] \\
&= \sum\limits_{a\neq i}^{m}A_{i,a}  \mathbb{E}[Q_{i,1}^3Q_{a,1}] 
+ \sum\limits_{r\neq i}^{m}\sum\limits_{a\neq r}^{m}A_{r,a}  \mathbb{E}[Q_{i,1}^2Q_{r,1}Q_{a,1}] \\
&=
 \sum\limits_{r\neq i}^{m}\hspace{-0.25em}A_{r,i}  \mathbb{E}[Q_{i,1}^3Q_{r,1}] 
 + \sum\limits_{r\neq i}^{m}\hspace{-0.2em}\sum\limits_{a\neq r,a\neq i}^{m}\hspace{-1em}A_{r,a}  \mathbb{E}[Q_{i,1}^2Q_{r,1}Q_{a,1}]  \\
 &=
 \sum\limits_{r\neq i}^{m}\sum\limits_{a\neq r,a\neq i}^{m}A_{r,a}  \mathbb{E}[Q_{1,1}^2Q_{2,1}Q_{3,1}] = 0  
\end{aligned}
\end{equation}
For the first term on the right of \eqref{ii-first} we have, using \eqref{p3}, \eqref{p4},
\begin{equation}\label{ii-first-right}
\begin{split}
&\sum\limits_{r=1}^{m}\hspace{-0.25em}A_{r,r}  \mathbb{E}[Q_{i,1}^2Q_{r,1}^2]\hspace{-0.25em} =\hspace{-0.25em}
A_{i,i} \mathbb{E}[Q_{i,1}^4]\hspace{-0.15em}+\hspace{-0.15em}
\sum\limits_{r\neq i}^{m}\hspace{-0.25em}A_{r,r} \mathbb{E}[Q_{1,1}^2Q_{2,1}^2] \\
&=
 A_{i,i} \frac{3}{m(m+2)}+
\sum\limits_{r\neq i}^{m}A_{r,r} \frac{1}{m(m+2)}. 
\end{split}
\end{equation}
Continuing with $i=j$, the second term on the right of \eqref{general-i-j} is 
\begin{equation}\label{ii-second}
\begin{split}
&\sum\limits_{r=1}^{m}\sum\limits_{a=1}^{m}\hspace{-0.2em}A_{r,a} \mathbb{E}[Q_{i,2}Q_{r,2}Q_{a,1}Q_{i,1}] \hspace{-0.2em}=\hspace{-0.4em} 
\sum\limits_{r=1}^{m}\hspace{-0.2em}A_{r,r}\mathbb{E}[Q_{i,2}Q_{r,2}Q_{r,1}Q_{i,1}] \\&\quad+
\sum\limits_{r=1}^{m}\sum\limits_{a\neq r}^{m}A_{r,a}\mathbb{E}[Q_{i,2}Q_{r,2}Q_{a,1}Q_{i,1}] \\
&\quad=
\sum\limits_{r=1}^{m}A_{r,r}\mathbb{E}[Q_{i,2}Q_{r,2}Q_{r,1}Q_{i,1}] \\
&\quad=
A_{i,i} \frac{1}{m(m+2)} -
\sum\limits_{r\neq i}^{m}A_{r,r} \frac{1}{(m-1)m(m+2)}.
\end{split}
\end{equation}
Above, we used  \eqref{p4} and \eqref{p6}. Combining \eqref{general-i-j}, \eqref{ii-first}, \eqref{dbl-ii}, \eqref{ii-first-right} and \eqref{ii-second} we get that for $i=j$,
\begin{align*}
&\mathbb{E}[(QQ^TAQQ^T)_{i,i}] \\&= 
k A_{i,i} \frac{3}{m(m+2)}+
k\sum\limits_{r\neq i}^{m}A_{r,r} \frac{1}{m(2+m)} + \frac{k(k-1)}{m(m+2)}A_{i,i} \\&\quad-k(k-1)
\sum\limits_{r\neq i}^{m}A_{r,r} \frac{1}{m-1}\frac{1}{m(m+2)} \\
&=
\frac{k}{m}\frac{(k+2) }{(m+2)} A_{i,i}+
\frac{k}{m}\frac{(m-k)}{(m-1)(m+2)} \sum\limits_{r\neq i}^{m}A_{r,r}.
\end{align*}
Recall that $A=vv^T$ and let $\alpha = \frac{k+2}{m+2}$. Then
\begin{equation*}
\mathbb{E}[(QQ^TAQQ^T)_{i,i}]
=
\frac{k}{m}\alpha v_i^2 + \frac{k}{m^2}\beta\sum\limits_{r\neq i}v_{r}^2 
\end{equation*} 
Next, assume that $i\neq j$. For the first term on the right of \eqref{general-i-j},
\begin{equation}\label{i-j-first}
\begin{split}
&\sum\limits_{r=1}^{m}\sum\limits_{a=1}^{m}A_{r,a}\mathbb{E}[Q_{i,1}Q_{r,1}Q_{a,1}Q_{j,1}]
 \\&=
\sum\limits_{r=1}^{m}A_{r,r} \mathbb{E}[Q_{i,1}Q^2_{r,1}Q_{j,1}] +
\sum\limits_{r=1}^{m}\sum\limits_{a\neq r}^{m}A_{r,a}  \mathbb{E}[Q_{i,1}Q_{r,1}Q_{a,1}Q_{j,1}] \\
&=
\sum\limits_{a\neq i}^{m}A_{i,a}  \mathbb{E}[Q^2_{i,1}Q_{a,1}Q_{j,1}] 
+
\sum\limits_{r\neq i}^{m}\sum\limits_{a\neq r}^{m}A_{r,a} \mathbb{E}[Q_{i,1}Q_{r,1}Q_{a,1}Q_{j,1}] \\
&=
A_{i,j}  \mathbb{E}[Q^2_{i,1}Q^2_{j,1}] 
+
\sum\limits_{r\neq i}^{m}\sum\limits_{a\neq r}^{m}A_{r,a}  \mathbb{E}[Q_{i,1}Q_{r,1}Q_{a,1}Q_{j,1}] 
\end{split}
\end{equation}
The double sum on the right of the above satisfies
\begin{equation}\label{i-j-first-2}
\begin{aligned}
\sum\limits_{r\neq i}^{m}\sum\limits_{a\neq r}^{m}\hspace{-0.3em} A_{r,a}\mathbb{E}[Q_{i,1}Q_{r,1}Q_{a,1}Q_{j,1}] 
\hspace{-0.2em}&=\hspace{-0.3em}
\sum\limits_{a\neq j}^{m}\hspace{-0.3em}A_{j,a}  \mathbb{E}[Q_{i,1}Q_{a,1}Q^2_{j,1}] \\&\hspace{-6em}+
\sum\limits_{r\neq i,r\neq j}^{m}\sum\limits_{a\neq r}^{m}A_{r,a} \mathbb{E}[Q_{i,1}Q_{r,1}Q_{a,1}Q_{j,1}]
\\
&=
A_{j,i} \mathbb{E}[Q_{i,1}^2Q^2_{j,1}]
\end{aligned}
\end{equation}
Combining \eqref{i-j-first} and \eqref{i-j-first-2}, the first term on the right of \eqref{general-i-j} is
\begin{equation}\label{ijright}\sum\limits_{r=1}^{m}\sum\limits_{a=1}^{m}A_{r,a}\mathbb{E}[Q_{i,1}Q_{r,1}Q_{a,1}Q_{j,1}]=
A_{i,j}\frac{2}{m(m+2)},\end{equation}
due to \eqref{p4}. For the second term on the right of \ref{general-i-j} we have
\begin{equation}\label{ijright2}
\begin{aligned}
&\sum\limits_{r=1}^{m}\sum\limits_{a=1}^{m}A_{r,a}
\mathbb{E}[Q_{i,2}Q_{r,2}Q_{a,1}Q_{j,1}] =
\sum\limits_{a=1}^{m}A_{i,a}
\mathbb{E}[Q_{i,2}^2Q_{a,1}Q_{j,1}] \\&\quad\quad+
\sum\limits_{r\neq i}^{m}\sum\limits_{a=1}^{m}A_{r,a}
\mathbb{E}[Q_{i,2}Q_{r,2}Q_{a,1}Q_{j,1}] 
\end{aligned}
\end{equation}
The first term on the right of the above satisfies
\begin{equation}\label{ijright3}
\begin{split}
\sum\limits_{a=1}^{m}A_{i,a}
\mathbb{E}[Q_{i,2}^2Q_{a,1}Q_{j,1}] 
&=
A_{i,j}\mathbb{E}[Q_{i,2}^2Q_{j,1}^2] \\& +
\sum\limits_{a\neq j}^{m}A_{i,a}\mathbb{E}[Q_{i,2}^2Q_{a,1}Q_{j,1}] \\
&=
A_{i,j}\frac{m+1}{(m-1)m(m+2)}, 
\end{split}
\end{equation}
where we used \eqref{p5}. On the other hand, the second term satisfies
\begin{equation}\label{ijright4}
\begin{aligned}
&\sum\limits_{r\neq i}^{m}\sum\limits_{a=1}^{m}A_{r,a}\mathbb{E}[Q_{i,2}Q_{r,2}Q_{a,1}Q_{j,1}] \\
&=\hspace{-0.4em}
\sum\limits_{r\neq i}^{m}\hspace{-0.3em}A_{r,j}\mathbb{E}[Q_{i,2}Q_{r,2}Q^2_{j,1}] \hspace{-0.2em}+\hspace{-0.2em}
\sum\limits_{r\neq i}^{m}\sum\limits_{a\neq j}^{m}A_{r,a}\mathbb{E}[Q_{i,2}Q_{r,2}Q_{a,1}Q_{j,1}] \\
&=
\sum\limits_{a\neq j}^{m}A_{j,a}\mathbb{E}[Q_{i,2}Q_{j,2}Q_{a,1}Q_{j,1}] \\&\quad\quad+
\sum\limits_{r\neq i,r\neq j}^{m}\sum\limits_{a\neq j}^{m}A_{r,a}\mathbb{E}[Q_{i,2}Q_{r,2}Q_{a,1}Q_{j,1}] \\
&=A_{j,i}\mathbb{E}[Q_{i,2}Q_{j,2}Q_{i,1}Q_{j,1}] +
\hspace{-0.7em}\sum\limits_{a\neq j,a\neq i}^{m}\hspace{-0.7em}A_{j,a}\mathbb{E}[Q_{i,2}Q_{j,2}Q_{a,1}Q_{j,1}] \\
&=
 -
A_{j,i}\frac{1}{(m-1)m(m+2)}.
\end{aligned}
\end{equation}
Here we used  \eqref{p6}. Applying \eqref{ijright2},\eqref{ijright3} and \eqref{ijright4},
\begin{equation}\label{i-j-second}
\begin{split}
&\sum\limits_{r=1}^{m}\sum\limits_{a=1}^{m}A_{r,a}
\mathbb{E}[Q_{i,2}Q_{r,2}Q_{a,1}Q_{j,1}] \\&=
A_{i,j}\frac{m+1}{(m-1)m(m+2)} -
A_{j,i}\frac{1}{(m-1)m(m+2)}.
\end{split}
\end{equation}
Combining \eqref{general-i-j}, \eqref{ijright}, and \eqref{i-j-second} we see that when $i\neq j$,
\begin{align*}
&\mathbb{E}[(QQ^TAQQ^T)_{i,j}] \\
&\hspace{0em}=
\left(\frac{2k(m-1)+k(k-1)(m+1)-k(k-1)}{(m-1)m(m+2)}\right)v_{i}v_j  \\
&=
\frac{k}{m}
\left(1-\beta\right)v_{i}v_j . 
\end{align*}
As $\|v\|=1$ and $\alpha-\frac{\beta}{m} = 1-\beta$, we summarize the above as
\[
\mathbb{E}[(QQ^TAQQ^T)_{i,j}]
=
\small\left\{
\begin{aligned}
&\frac{k}{m}(1-\beta) v_i^2 + \frac{k}{m^2}\beta &\text{ if } i = j, \\
&\frac{k}{m}(1-\beta) v_i v_j & \text{ else. }   \qedhere
\end{aligned}
\right.
\]
\end{proof}
\begin{proposition}\label{proof-norm-k}
Let $Q$ be an $m\times k$ matrix whose entries are i.i.d samples from $\mc{N}(0,1)$. (That is, distributed like the \normk{} matrices from Definition  \ref{def:mat}, without the scaling factor.). Then
$\mathbb{E}[QQ^T AQQ^T] = k(k+1)A + k(\tr A) I$.
\end{proposition}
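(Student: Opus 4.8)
The plan is to compute the $(i,j)$ entry of the expectation directly, as in the proof of Proposition~\ref{proof-orth-k}, but exploiting the fact that for Gaussian entries the required fourth moments factor cleanly via Wick's theorem rather than requiring the separate moment calculations of the Haar case. First I would expand
\[
(QQ^TAQQ^T)_{i,j} = \sum_{r=1}^{m}\sum_{a=1}^{m} A_{r,a}\sum_{c=1}^{k}\sum_{d=1}^{k} Q_{i,c}Q_{r,c}Q_{a,d}Q_{j,d},
\]
so that after taking expectations the entire computation reduces to evaluating $\mathbb{E}[Q_{i,c}Q_{r,c}Q_{a,d}Q_{j,d}]$ for the index patterns that arise.

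Next, since the entries $Q_{p,q}$ are i.i.d.\ standard normal, they are jointly Gaussian with $\mathbb{E}[Q_{p,q}Q_{p',q'}] = \delta_{p,p'}\delta_{q,q'}$, so I would apply Wick's (Isserlis') theorem to write the fourth moment as the sum over the three pairings of products of pairwise covariances:
\[
\mathbb{E}[Q_{i,c}Q_{r,c}Q_{a,d}Q_{j,d}] = \delta_{i,r}\delta_{a,j} + \delta_{i,a}\delta_{r,j}\delta_{c,d} + \delta_{i,j}\delta_{r,a}\delta_{c,d}.
\]
Here the pairing that matches the two factors sharing column $c$ against the two sharing column $d$ leaves $c$ and $d$ free, whereas the other two pairings force $c=d$.

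Finally I would sum over $c,d\in\{1,\dots,k\}$: the unconstrained pairing contributes a factor $k^2$, while each of the two $\delta_{c,d}$ pairings contributes $k$, yielding $k^2\delta_{i,r}\delta_{a,j} + k\delta_{i,a}\delta_{r,j} + k\delta_{i,j}\delta_{r,a}$. Substituting this into the double sum over $r,a$ and contracting the Kronecker deltas against $A_{r,a}$ gives $k^2 A_{i,j} + k A_{j,i} + k(\tr A)\delta_{i,j}$; using that $A$ is symmetric (the case relevant for a smoothness matrix) collapses the first two terms to $k(k+1)A_{i,j}$, which in matrix form is exactly $k(k+1)A + k(\tr A)I$. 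The only real obstacle is careful bookkeeping — enumerating the three Wick pairings correctly and tracking which of them force the column indices to coincide — since once the fourth moment is in hand the remaining index sums are immediate, in sharp contrast to the Haar case of Proposition~\ref{proof-orth-k}, where the moments themselves (Proposition~\ref{rand-orth-res}) are the difficult ingredient.
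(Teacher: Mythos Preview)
Your proposal is correct and follows the approach the paper indicates—direct entrywise moment computation, simplified relative to Proposition~\ref{proof-orth-k} by the i.i.d.\ structure—with Wick's theorem providing a clean unified way to obtain all the required fourth moments at once rather than by case analysis. You also rightly flag that the stated identity needs $A$ symmetric (so that $k^2A + kA^T$ collapses to $k(k+1)A$); the paper leaves this implicit here but states it explicitly in the Haar analogue.
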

\begin{proof}
The proof is similar to that of Proposition \ref{proof-orth-k}, with simplifications due to the fact the entries of $Q$ are now i.i.d.\end{proof}

\begin{proposition}\label{q-is-norm}
Let $\mc{Q} \in \{ \haark{}, \normk{}, \randk{}\}$. Then $\|\cdot\|_{\mc{Q}}$ determines a norm on $m \times m$ matrices.
\end{proposition}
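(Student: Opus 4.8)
The plan is to check only the norm axiom that does not come for free. Writing $T_{\mc{Q}}(A) = \mathbb{E}_{Q\sim\mc{Q}}[QQ^TAQQ^T]$ so that $\|A\|_{\mc{Q}} = \|T_{\mc{Q}}(A)\|$ with $\|\cdot\|$ the operator norm, the map $T_{\mc{Q}}$ is linear in $A$ and $\|\cdot\|$ is a genuine norm; hence nonnegativity, absolute homogeneity $\|cA\|_{\mc{Q}} = |c|\,\|A\|_{\mc{Q}}$, and the triangle inequality are inherited at once. The only property requiring work is definiteness, and since $\|\cdot\|$ is a norm we have $\|A\|_{\mc{Q}} = 0$ iff $T_{\mc{Q}}(A) = 0$. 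So the entire statement reduces to showing that each operator $T_{\mc{Q}}$ has trivial kernel.

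For \haark{} and \normk{} I would read off the explicit form of $T_{\mc{Q}}$ from Propositions \ref{proof-orth-k} and \ref{proof-norm-k}, both of which have the shape $T_{\mc{Q}}(A) = c_1 A + c_2 (\tr A)\,I$ on symmetric $A$, with $c_1 = \tfrac{k}{m}(1-\beta),\ c_2 = \tfrac{k}{m^2}\beta$ for \haark{} and $c_1 = k(k+1),\ c_2 = k$ for \normk{}. The key step is a trace trick: applying $\tr$ to $T_{\mc{Q}}(A) = 0$ yields $(c_1 + m\,c_2)\tr A = 0$, and a short computation gives $c_1 + m c_2 = \tfrac{k}{m}$ for \haark{} and $k(k+1+m)$ for \normk{}, both strictly positive, so $\tr A = 0$; substituting back leaves $c_1 A = 0$, hence $A = 0$. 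This forces me to confirm $c_1 > 0$, i.e. $1 - \beta > 0$, which follows directly from $\beta = \tfrac{m(m-k)}{(m+2)(m-1)}$ once one checks the numerator is smaller than the denominator for $m \geq 2$.

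For \randk{} I would instead invoke Proposition \ref{proof-rand-k}, which presents $T_{\randk{}}(A) = M \circ A$ as a Hadamard product with $M_{ij} = \tfrac{k}{m}\big(\tfrac{k-1}{m-1} + (1 - \tfrac{k-1}{m-1})\delta_{ij}\big)$. Then $T_{\randk{}}(A) = 0$ reads entrywise as $M_{ij}A_{ij} = 0$, so injectivity is equivalent to every entry of $M$ being nonzero: the diagonal entries $M_{ii} = k/m$ are always positive, and the off-diagonal entries $\tfrac{k(k-1)}{m(m-1)}$ are positive exactly when $k \geq 2$.

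I expect definiteness to be the whole obstacle, with the delicate regime being $k = 1$: there the off-diagonal multiplier for \randk{} vanishes and $T_{\randk{}}$ retains only $\diag A$, so a separate treatment of small $k$ is unavoidable. A secondary subtlety, if one wants a norm on all of $\reals^{m\times m}$ rather than on symmetric matrices, is that the formulas of Propositions \ref{proof-orth-k} and \ref{proof-norm-k} are derived for symmetric $A$; for general $A$ one should split $A$ into its symmetric and skew parts, handle the symmetric part as above, and check that $T_{\mc{Q}}$ acts on the skew part by a scalar that is again nonzero for $k \geq 2$. All of these reduce to confirming strict positivity of a handful of explicit scalar coefficients, which is routine given Proposition \ref{prop:q}.
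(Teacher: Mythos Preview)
Your approach is essentially the same as the paper's: reduce to injectivity of $T_{\mc{Q}}$, then for \haark{}/\normk{} take a trace to kill $\tr A$ and conclude $A=0$, and for \randk{} use the explicit formula to force $A$ diagonal and then zero. The paper phrases the first step as ``$A$ is a scalar multiple of $I$, now take traces'' and the second as ``$A=-\alpha\,\diag A$ so $A$ is diagonal, hence $\diag A=-\alpha\,\diag A$'', but these are the same arguments you give.

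You are in fact more careful than the paper on one point: your observation that the off-diagonal multiplier $\tfrac{k(k-1)}{m(m-1)}$ vanishes at $k=1$ is correct, and the paper's own proof silently assumes $k\geq 2$ when it writes $\alpha=\tfrac{m-k}{k-1}>0$. At $k=1$ one has $T_{\randk{}}(A)=\tfrac{1}{m}\diag A$, which annihilates every strictly off-diagonal matrix, so $\|\cdot\|_{\randk{}}$ is only a seminorm there; your instinct that this case needs separate treatment is right, though the resolution is that the statement as written requires $k\geq 2$ for \randk{}. Your remark about symmetric versus general $A$ is also a legitimate caveat the paper does not address.
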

\begin{proof}
Let $\mc{Q}=\haark{}$ and suppose that $\|A\|_{\mc{Q}}=0$. Equation \eqref{qsemi-orth-k} implies
$A = -\frac{\beta}{1-\beta}\frac{\tr A}{m}I$. Hence $A$ is a scalar multiple of the identity, and taking traces we see $\tr A = - \frac{\beta}{1-\beta}\tr A$, implying $\tr A=0$. Since the diagonal elements of $A$ are all equal, we must have $A=0$. The proof for $\normk{}$ is almost the same.
Suppose that $\mc{Q}=\randk{}$ and $\|A\|_{\mc{Q}}=0$. According to \eqref{qsemi-rand-k}, we must have $A = -\alpha \diag A$, for $\alpha=\frac{m-k}{k-1} > 0$. Hence $A$ is diagonal and $\diag A = -\alpha  \diag A$, implying $A=0$.
\end{proof}
\renewcommand{\appendixsectionformat}[2]{%
  \noindent\vspace{-2.0em}%
}
\end{document}